\documentclass[a4paper]{amsart}
\usepackage{amsmath,amssymb,amsthm,amscd,mathtools}
\usepackage{enumerate,bm,color,here,url,hyperref}

\usepackage{graphicx}
\graphicspath{{./figure/}}

\newcommand{\fig}[3][width=12cm]{
\begin{figure}[htb]
 \centering 
 \includegraphics[#1,clip]{#2} 
 \caption{#3} 
\label{fig:#2}
\end{figure}}

\theoremstyle{plain}
	\newtheorem{thm}{Theorem}[section]
	\newtheorem{lem}[thm]{Lemma}
	\newtheorem{prop}[thm]{Proposition}
	\newtheorem{cor}[thm]{Corollary}
	
	\newtheorem{conj}[thm]{Conjecture}
	\newtheorem{ques}[thm]{Question}
	
	\newtheorem*{main1}{Theorem~\ref{thm:isotopy}}
	\newtheorem*{main2}{Theorem~\ref{thm:hyp_weave}}
\theoremstyle{definition}
	
\theoremstyle{remark}

\DeclareMathOperator{\Homeo}{Homeo}
\DeclareMathOperator{\inter}{int}
\DeclareMathOperator{\Isom}{Isom}

\DeclareMathOperator{\vol}{vol}

\newcommand{\bbZ}{\mathbb{Z}}
\newcommand{\bbR}{\mathbb{R}}

\begin{document}

\title{On Isotopies and hyperbolicity of weaves}

\author{Ken'ichi Yoshida}
\address{International Institute for Sustainability with Knotted Chiral Meta Matter (WPI-SKCM$^2$), Hiroshima University, 1-3-1 Kagamiyama, Higashi-Hiroshima, Hiroshima 739-8531, Japan}
\email{kncysd@hiroshima-u.ac.jp}
\subjclass[2020]{57K10, 57K32, 57Q37}
\keywords{Links in the thickened torus, Hyperbolic links}
\date{}

\begin{abstract}
A weave is a type of textile that consists of vertical and horizontal threads, 
and typically it has a periodic structure. 
In this paper, we regard a weave as a link in the thickened torus with a diagram consisting of closed geodesics. 
As main results, we characterize isotopies and hyperbolicity of weaves to determine them from diagrams. 
Moreover, we show that there does not exist an essential Conway sphere for a weave. 
We use normal positions of essential surfaces of weave complements to describe them. 
\end{abstract}

\maketitle

\section{Introduction}
\label{section:intro}

A weave is a type of textile produced by combining threads (called warps and wefts) in two directions. 
The three types of weaves in Figure~\ref{fig:basic_weave.pdf} are commonly used as basic weaves. 
The plain weave is represented by an alternating diagram and has the simplest structure. 
A twill weave has a pattern of diagonal ribs. 
A satin weave has a structure in which most of the wefts (or the warps) are in front. 
Twill and satin weaves have variations by parameters, 
and their structures were mathematically investigated by Gr\"{u}nbaum and Shephard \cite{GS80}. 
Recently, there have been various studies to create weaves from molecules \cite{JS21, LOTY18, ZAALZ22}.

\fig[width=12cm]{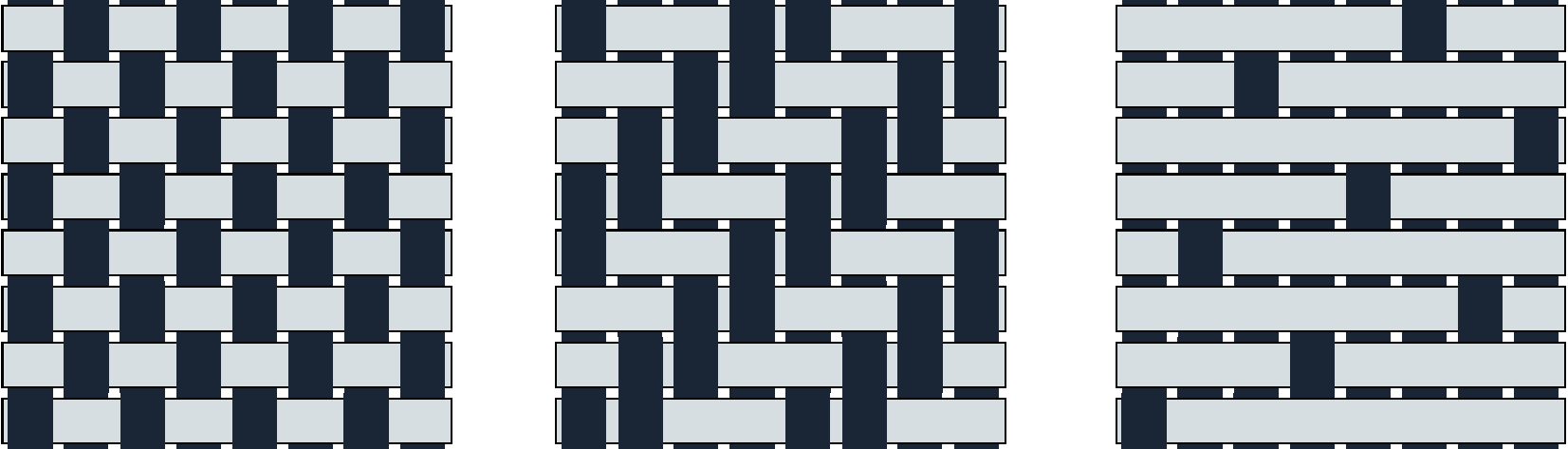}{Basic weaves: plain, twill, and satin}

Mathematical research on textile structures, including weaves, using knot theory was initiated by Grishanov, Meshkov, and Omel'chenko \cite{GMO07}. 
A typical textile structure can be represented by the preimage of a link in the thickened torus $T^{2} \times I$ by the universal covering map, which is called a doubly periodic tangle \cite{DLM26}. 
To consider doubly periodic tangles, several invariants such as polynomial invariants of links in $T^{2} \times I$ have been computed. 
However, a doubly periodic tangle is represented by infinitely many links in $T^{2} \times I$. 
These are related by finite covers. 
Nonetheless, consistency of finite covers and isotopies of links in $T^{2} \times I$ was shown in \cite{KMMY25} using hyperbolic geometry. 
Hyperbolic structures on the complements of weaves (especially the plain weave \cite{CKP16, Yoshida22}) have been investigated as interesting examples. 
More specifically, equivalence classes of weaves in which the warps and wefts have the same number of over- and under-crossings were classified in \cite{FKM23}. 
Stable configurations of weaves with repulsive interactions were investigated in \cite{KNSS26}.

In this paper, we regard a weave as a link in the thickened torus $T^{2} \times I$ with a diagram on $T^{2}$ that consists of closed geodesics. 
A weave can be regarded as a doubly periodic analogue of a braid closure in the solid torus. 
A rod packing in the 3-torus, which consists of closed geodesics, investigated in \cite{Hui25, HP24}, can be regarded as a triply periodic analogue. 

In the following main theorems, we characterize isotopies and hyperbolicity of weaves. 

\begin{main1}
If two weaves $L_{0}$ and $L_{1}$ in $T^{2} \times I$ are isotopic links, 
then $L_{0}$ is isotopic to $L_{1}$ via weaves. 
In other words, there is an ambient isotopy from $L_{0}$ to $L_{1}$ in which each intermediate link is a weave. 
\end{main1}

\begin{main2}
An $m \times n$-weave $L$ in $T^{2} \times I$ for $m, n \geq 1$ is hyperbolic if and only if 
\begin{itemize}
\item $L$ is not layered, and 
\item $L$ has no pair of parallel components even after interchanging adjacent comparable components. 
\end{itemize}
\end{main2}

Here, a weave is called layered if it is separated into two weaves. 
Consequently, we can determine from diagrams whether two weaves are isotopic and whether a weave is hyperbolic. 
For example, the three $8 \times 8$-weaves in Figure~\ref{fig:basic_weave.pdf} are hyperbolic and not mutually isotopic. 
In general cases, we have to be careful with the fact that two adjacent components in a certain condition can be interchanged.

In Section~\ref{section:isotopy}, we prepare notions and show Theorem~\ref{thm:isotopy}. 
In Section~\ref{section:torus}, we describe the essential tori in a weave complement to show Theorem~\ref{thm:hyp_weave}. 
We also discuss how generic hyperbolic weaves are. 
In Section~\ref{section:conway}, we show that there does not exist an essential Conway sphere for a weave. 
As a result, if $L$ is a hyperbolic weave, the thickened torus $T^{2} \times I$ admits a hyperbolic orbifold structure with singular locus $L$ of a cone angle $\pi$. 
In Section~\ref{section:question}, we ask questions on symmetries and hyperbolic volumes of weaves. 
We also give remarkable examples of weaves.

\section{Isotopies of weaves}
\label{section:isotopy}

In this section, we prepare notions and show a fundamental result on isotopies of weaves. 
A \emph{link} in a 3-manifold $X$ is a disjoint union of circles embedded in the interior of $X$. 
For two links $L_{0}$ and $L_{1}$ in $X$, 
an \emph{ambient isotopy} from $L_{0}$ to $L_{1}$ is a continuous map $F \colon X \times [0,1] \to X$ 
such that $F_{t} = F(\cdot, t) \colon X \to X$ is a homeomorphism for each $0 \leq t \leq 1$, $F_{0}$ is the identity map, and $F_{1}(L_{0}) = L_{1}$. 
Two links $L_{0}$ and $L_{1}$ are \emph{isotopic} if there is an ambient isotopy from $L_{0}$ to $L_{1}$. 

Let $S^{1} = \bbR / \bbZ$, $T^{2} = \bbR^{2} / \bbZ^{2} = S^{1} \times S^{1}$, and $I = [0,1]$. 
Let $p_{1}, p_{2} \colon T^{2} \times I \to S^{1}$ denote the projections to the first and second coordinates, respectively. 
For $m, n \geq 0$, an \emph{$m \times n$-weave} is a link in the thickened torus $T^{2} \times I$ that is a disjoint union of $m$ warps and $n$ wefts, where 
\begin{itemize}
\item a \emph{warp} (vertical component) is the image of an embedding $\iota \colon S^{1} \to T^{2} \times I$ such that $p_{1} \circ \iota \colon S^{1} \to S^{1}$ is constant, and $p_{2} \circ \iota \colon S^{1} \to S^{1}$ is a homeomorphism, and 
\item a \emph{weft} (horizontal component) is the image of an embedding $\iota \colon S^{1} \to T^{2} \times I$ such that $p_{2} \circ \iota \colon S^{1} \to S^{1}$ is constant, and $p_{1} \circ \iota \colon S^{1} \to S^{1}$ is a homeomorphism. 
\end{itemize}
We often call it a weave by ignoring $m$ and $n$. 
Note that a link isotopic to a weave may not be a weave in our terminology. 

A \emph{weaving diagram} of an $m \times n$-weave is the union of circles 
$\{ (2i-1)/2m \} \times S^{1}$ and $S^{1} \times \{ (2j-1)/2n \}$ for $i = 1, \dots, m$ and $j = 1, \dots, n$ on the torus $T^{2} = I \times I / \sim$ with data of the overpass or underpass at each crossing. 
Since an $m \times n$-weave can be isotoped so that the warps and the wefts are respectively projected to mutually disjoint circles, its isotopy class is represented by a weave diagram. 
A weaving diagram corresponds to a function $c \colon \{ 1, \dots, m \} \times \{ 1, \dots, n \} \to \{ 0,1 \}$ 
so that for the crossing of the $i$-th warp and the $j$-th weft 
\[
c(i,j) = 
\begin{cases}
0 & \text{if the warp is below,} \\
1 & \text{if the warp is above.} 
\end{cases}
\]
A weaving diagram is often drawn as a tiling of white and black squares as in \cite{GS80}. 
Some figures of weaves in this paper are intended to resemble such tilings. 

There are the following two relations between weaving diagrams. 
First, a translation on the torus $T^{2}$ maps a weaving diagram to another one. 
It corresponds to an element of $\bbZ / m\bbZ \times \bbZ / n\bbZ$ acting on $\{ 1, \dots, m \} \times \{ 1, \dots, n \}$. 
Next, the $i_{0}$-th and $i_{1}$-th warps are \emph{comparable} with respect to the wefts 
if the two functions $c(i_{0}, \cdot ), c(i_{1}, \cdot) \colon \{ 1, \dots, n \} \to \{ 0,1 \}$ 
(called the \emph{crossing function} of the warps) are comparable, 
i.e., $c(i_{0}, j) \leq c(i_{1}, j)$ for any $j \in \{ 1, \dots, n \}$, or $c(i_{0}, j) \geq c(i_{1}, j)$ for any $j \in \{ 1, \dots, n \}$. 
We define the comparability of two wefts in the same way. 
We can \emph{interchange} the two adjacent comparable components as shown in Figure~\ref{fig:interchange.pdf}. 
These two relations preserve the isotopy class of a weave. 
Note that two interchanges of two warps and two wefts are commutative. 

\fig[width=12cm]{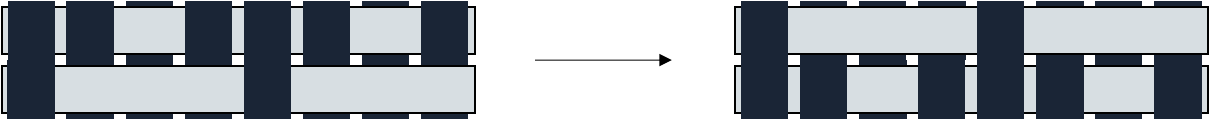}{An interchange of two wefts}

We show that these two relations are sufficient. 
Since a generic isotopy via weaves is represented by a finite sequence of isotopies on $T^{2}$ and interchanges of adjacent components, 
Theorem~\ref{thm:isotopy} implies Corollary~\ref{cor:isotopy}. 
Theorem~\ref{thm:isotopy} can be compared with the fact \cite[Theorem 2.1]{KT08} 
that if two braid closures are isotopic in the solid torus, 
then the corresponding elements of the braid group are conjugate, 
and so they are isotopic via braid closures. 

\begin{thm}
\label{thm:isotopy}
If two weaves $L_{0}$ and $L_{1}$ in $T^{2} \times I$ are isotopic links, 
then $L_{0}$ is isotopic to $L_{1}$ via weaves. 
In other words, there is an ambient isotopy from $L_{0}$ to $L_{1}$ in which each intermediate link is a weave. 
\end{thm}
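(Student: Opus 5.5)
The strategy is to reduce the problem to combinatorial data that any ambient isotopy must preserve, and then show that this data determines the weaving diagram up to translation and interchanges of adjacent comparable components. Both of these operations are realized by isotopies through weaves.

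\textbf{Step 1: reduce to an isotopy fixing a standard form.} Let $F$ be an ambient isotopy from $L_{0}$ to $L_{1}$. Since $F_{1}$ is isotopic to the identity, it acts trivially on $H_{1}(T^{2} \times I)$. Hence warps of $L_{0}$ go to components homologous to $(0,1)$, which are warps, and wefts go to wefts. So $L_{0}$ and $L_{1}$ are both $m \times n$-weaves. The isotopy also preserves the top and bottom boundary components $T^{2} \times \{0\}$ and $T^{2} \times \{1\}$. Consider the complement $M = T^{2} \times I \setminus N(L_{0})$.

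For each warp $a$, take the vertical annulus $A_{a} = \{x_{a}\} \times S^{1} \times I$ through $a$; it splits into an \emph{upper} and a \emph{lower} half-annulus, joining $a$ to $T^{2}\times\{1\}$ and to $T^{2}\times\{0\}$. A weft $b$ lies above $a$ exactly when $b$ meets the lower half of $A_{a}$ algebraically zero times but meets the upper half once, measured relative to the boundary. I would make this intrinsic using linking data. Consider the relative homology class of the curve $b$ in $T^{2}\times I \setminus a$. Alternatively, compare the classes in $H_{1}(T^{2}\times I \setminus (a \cup b))$ of loops obtained by pushing $b$ to the top boundary.

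\textbf{Step 2: define the crossing function invariantly.} The invariant should be: $c(i,j)=1$ if and only if the weft $b_{j}$ can be isotoped to the bottom boundary in the complement of the warp $a_{i}$. Equivalently, the two-component link $a_{i} \cup b_{j}$ is isotopic in $T^{2}\times I$ to the configuration with $a_{i}$ on top. The two configurations of a single warp and a single weft are distinguished by an invariant, for instance a homological linking with the boundary tori computed in $H_{1}$ of the complement. The linking of $b_{j}$ with the relative class of the upper half-annulus of $a_{i}$ gives $0$ or $1$. This number is preserved under $F$, because $F$ preserves each boundary component and each component of the link. Thus the function $c$, with rows and columns indexed by the components (not by positions), is an isotopy invariant.

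\textbf{Step 3: recover the diagram from the invariant.} An ambient isotopy carries the components of $L_{0}$ to those of $L_{1}$ bijectively, preserving $c$. What remains undetermined is the cyclic order of the warps along $p_{1}$, and likewise of the wefts. This order cannot be recovered in general, which is why comparability matters.

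The main obstacle is to show the following. If two cyclic orderings of the warps are realized by isotopic weaves with the same labeled crossing function, then they differ by a rotation and by swaps of adjacent comparable warps. Swaps of adjacent incomparable warps must be shown impossible.

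For this I would project $F$ to a path of configurations and show that the cyclic order can change only when two warps pass each other. Such a passage should be obstructed when the warps are incomparable. The obstruction comes from two wefts $b_{j}$ and $b_{k}$ with $c(i_{0},j)>c(i_{1},j)$ and $c(i_{0},k)<c(i_{1},k)$. Together with $a_{i_{0}}$ and $a_{i_{1}}$, these four components form a $2\times 2$ sub-weave, whose essential annulus or torus structure (or a linking-type invariant in a double cover) detects the cyclic order. Concretely, I would compute an invariant of the sublink $a_{i_{0}}\cup a_{i_{1}}\cup b_{j}\cup b_{k}$ and check that it distinguishes the two orders. One candidate is the Alexander polynomial in the $T^{2}\times I$ setting; another is the existence of a vertical annulus separating the two warps.

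Given this, any bijection preserving $c$ and the cyclic orders up to comparable swaps is realized by a composition of translations and interchanges. Hence $L_{0}$ is isotopic to $L_{1}$ via weaves.
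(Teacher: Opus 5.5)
Your Steps~1 and~2 are sound and amount to the paper's Lemma~\ref{lem:cross}: the crossing function, indexed by components, is an isotopy invariant. The genuine gap is Step~3. It is the whole content of the theorem, you leave it as a plan, and the tools you propose for it cannot work.

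First, the intermediate links of an arbitrary ambient isotopy $F$ are not weaves. So there is no ``path of configurations'' along which a cyclic order of warps is defined and changes only when two warps pass each other. $F$ may drag warps around and through the wefts in complicated ways.

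Second, no invariant of the four-component sublink $a_{i_0}\cup a_{i_1}\cup b_j\cup b_k$, nor of its preimage in any cover, can detect a swap of two incomparable warps. Two points on a circle have only one cyclic order. Moreover, the swapped $2\times 2$ configuration is the image of the original under the translation $(x,y,z)\mapsto(x+1/2,y,z)$, which is an isotopy through weaves preserving the labeled crossing function. Whether a reordering is possible depends on the remaining warps, so the obstruction is not a property of that sublink in $T^2\times I$.

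Third, the cyclic order is too coarse as bookkeeping. From $(a_1,a_2,a_3)$, swapping any one of the three cyclically adjacent pairs gives the same cyclic order $(a_1,a_3,a_2)$. It therefore cannot tell you which pairs actually passed each other, and hence which pairs must be shown comparable.

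What is missing is a way to tame $F$ and to measure the displacement of individual warps. The paper does this in four moves.
\begin{enumerate}[(i)]
\item By Waldhausen's theorem, $\pi_0(\Homeo(T^2\times I, W'; \partial(T^2\times I)))\cong\mathcal{CB}_n\times\bbZ$. So $[F_1]$ is represented by a slice-preserving map $\mathrm{id}_{S^1}\times\hat g$, an annular braid of the wefts.
\item Epstein's theorem then straightens each warp inside its punctured annulus. This reduces to an isotopy fixing the wefts and the boundary pointwise while keeping the final positions of the warps.
\item Lift this isotopy to the infinite cyclic cover $\bbR\times S^1\times I$ with the boundary fixed. Each lift $\widetilde K^{s}_{i0}$ of a warp gets well-defined initial and final positions $a(i,s),b(i,s)\in\bbR$. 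A pair of lifts whose order is reversed is a pair that was genuinely interchanged in the complement of the fixed lifted wefts.
\item The local obstruction lives exactly there (Lemma~\ref{lem:square}). Capping off two weft lines with 2-handles turns the two orders of two incomparable warps into the 3-component unlink versus the Borromean rings. This is meaningful only for individual lifts relative to fixed wefts, not for the whole preimage, which is why your sublink invariants cannot see it.
\end{enumerate}
Once every reversed pair is known to be comparable, linearly interpolate the positions $e(i,s,t)=(1-2t)a(i,s)+2t\,b(i,s)$ and perform an interchange at each crossing time. This gives an isotopy through weaves, and the wefts are then handled in the same way.
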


\begin{cor}
\label{cor:isotopy}
Two weaving diagrams represent isotopic links in $T^{2} \times I$ 
if and only if they are connected by a translation on $T^{2}$ and interchanges of adjacent comparable components. 
\end{cor}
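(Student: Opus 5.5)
The plan is to reduce the statement to the structure of the weave complement via essential annuli and tori, in analogy with the braid-closure argument of \cite[Theorem 2.1]{KT08}. Given weaves $L_{0}$ and $L_{1}$ and an ambient isotopy $F$ with $F_{1}(L_{0}) = L_{1}$, we first record the homological data. Each warp of $L_{0}$ represents $(0,\pm 1)$ in $H_{1}(T^{2}\times I) \cong \bbZ^{2}$ and each weft represents $(\pm 1,0)$. The homeomorphism $F_{1}$ is isotopic to the identity, so it acts trivially on homology. Hence $F_{1}$ sends warps to warps and wefts to wefts, and $m$ and $n$ agree for $L_{0}$ and $L_{1}$. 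It therefore suffices to show that the weaving diagrams of $L_{0}$ and $L_{1}$ are related by translations and interchanges, since each of these moves is realized by an isotopy through weaves. This means we are effectively proving Corollary~\ref{cor:isotopy} and deducing the theorem from it.

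To recover the diagram up to these moves, we extract it from topological data that $F_{1}$ must preserve. For each pair consisting of the $i$-th warp $a_{i}$ and the $j$-th weft $b_{j}$, consider the vertical annulus $A_{i} = \{x_{i}\} \times S^{1} \times I$ containing $a_{i}$. The weft $b_{j}$ meets $A_{i}$ in one point, either above or below $a_{i}$. This side is detected by a linking-type invariant: the intersection number of $b_{j}$ with the half-annulus of $A_{i}$ lying above $a_{i}$ and reaching to $T^{2}\times\{1\}$. The difficulty is that this quantity depends on the chosen annulus, while $F_{1}$ need not preserve $A_{i}$. The real content is therefore that the \emph{partial order} recording which components can be pushed to the top or bottom boundary past which others is an isotopy invariant. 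Concretely, for a subset $S$ of warps, ask whether there exists a properly embedded annulus from $a_{i}$ to $T^{2}\times\{1\}$ in the complement of the other components. We will argue that such annuli exist exactly when $c(i,j)=1$ for all wefts $b_{j}$, up to the interchange moves. This yields invariants determining $c$ modulo interchanges of comparable adjacent rows and columns.

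A cleaner route is to put $F_{1}$ (or the image under $F_{1}$ of the vertical annuli $A_{i}$ and horizontal annuli $B_{j}$) into normal position with respect to the natural decomposition of the weave complement. This decomposition cuts along the annuli $A_{i}$ and $B_{j}$ into pieces of the form (square)$\,\times I$ minus arcs. The steps are as follows.
\begin{enumerate}
\item Isotope $F_{1}(A_{i})$ relative to $L_{1}$ so that it meets the annuli $A'_{k}, B'_{l}$ of $L_{1}$ minimally, removing trivial circles and inessential arcs by the standard innermost-disk and outermost-arc arguments in the irreducible, boundary-irreducible complement.
\item Show that a minimal-position vertical annulus containing a warp of $L_{1}$ is isotopic, in the complement, to one of the $A'_{k}$ after a sequence of interchanges. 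The point is that an annulus of slope $(0,1)$ which crosses between the annuli $A'_{k}$ and $A'_{k+1}$ forces the crossing functions of warps $k$ and $k+1$ to be comparable.
\item Conclude that $F_{1}$ can be modified by an isotopy through weaves so that it preserves the family of vertical and horizontal annuli. At that point the crossing data match after a translation.
\end{enumerate}

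The main obstacle is step 2. There we must show that when a vertical essential annulus in the complement sweeps past a warp, the warp and the annulus's own component have comparable crossing functions. This is exactly where the interchange move arises. I would attack it by examining the intersection of the swept region, a product region between two vertical annuli, with each horizontal annulus $B'_{l}$. Each intersection is a disk or square that must avoid the weft $b'_{l}$. This forces $b'_{l}$ to pass on the same side of both warps, or in an order consistent with a single inequality direction, which is precisely comparability.
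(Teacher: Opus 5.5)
There is a genuine gap. Everything the statement asserts sits in your Step~2, and the mechanism you sketch for it cannot work. Comparability is a condition involving \emph{two} wefts at once: with a single weft, any two warps are comparable. Indeed, inside one horizontal annulus $B'_{l}$, a spanning arc through $w \cap B'_{l}$ that meets $b'_{l}$ once can be drawn on either side of the point $v \cap B'_{l}$, whatever the crossing data. So examining the swept region one $B'_{l}$ at a time yields no constraint. Nor is there any reason for that region to avoid $b'_{l}$. If it avoided every weft you would get $c(v,\cdot)=c(w,\cdot)$, which is stronger than the truth, since comparable non-parallel warps can be interchanged. The clause ``or in an order consistent with a single inequality direction'' is exactly what has to be proved. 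The real obstruction is a genuinely three-dimensional, Borromean-type phenomenon. If $c(v,l_{1})<c(w,l_{1})$ and $c(v,l_{2})>c(w,l_{2})$, the two orders of $v,w$ relative to fixed wefts $b'_{l_{1}},b'_{l_{2}}$ become the unlink and the Borromean rings after capping off (Lemma~\ref{lem:square}). All pairwise linking numbers agree there, so no argument built from individual intersection data can detect it. Your Step~3 is then essentially a restatement of Theorem~\ref{thm:isotopy}.

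Your outline also never decouples the two families. Step~2 compares $F_{1}(A_{i})$ with the horizontal annuli $B'_{l}$ of $L_{1}$, but $F$ moves the wefts as well, and reordering warps changes those annuli. It is therefore unclear how Step~3 would produce a single isotopy through weaves that handles both families. The paper separates them in four steps:
\begin{enumerate}
\item It uses Waldhausen's theorem (the relevant mapping class group is $\mathcal{CB}_{n}\times\bbZ$) and Epstein's theorem to replace $F$ by an isotopy fixing the wefts pointwise.
\item It lifts to $\bbR\times S^{1}\times I$, where the lifted warps are linearly ordered.
\item It shows that any order reversal happens only between comparable warps (Lemma~\ref{lem:comparable}, via Lemma~\ref{lem:square}).
\item It realizes the reordering by linear interpolation, with interchanges at the crossing times.
\end{enumerate}

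Two smaller points about your second paragraph. First, the intersection number you set aside does not depend on the chosen half-annulus: it is the linking number of the warp over the weft, which is exactly how Lemma~\ref{lem:cross} shows that crossing functions are preserved up to relabeling. Second, the annulus-existence test proposed there detects only whether $c(i,\cdot)\equiv 1$, i.e.\ at most crossing data. It says nothing about the cyclic order of the components, which is where the difficulty lies.
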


In the following lemmas, we show that if two warps (or wefts) can be interchanged, they are comparable. 
Let $L_{0}$ and $L_{1}$ be $m \times n$-weaves such that the warps and the wefts are respectively projected to mutually disjoint circles. 
For $k=0,1$, let $K_{1k}, \dots, K_{mk}$ denote the warps of $L_{k}$, and let $K'_{1k}, \dots, K'_{nk}$ denote the wefts of $L_{k}$. 
Suppose that there is an ambient isotopy $F$ from $L_{0}$ to $L_{1}$ such that $F_{1}(K_{i0}) = K_{i1}$ for $1 \leq i \leq m$ and $F_{1}(K'_{j0}) = K'_{j1}$ for $1 \leq j \leq n$. 
Moreover, suppose that $K_{i0}$ and $K_{j0}$ are the $i$-th warp and the $j$-th weft, 
but $K_{i1}$ and $K_{j1}$ are not necessarily so. 
The crossing functions and comparability of warps and wefts are defined through the weaving diagram. 
We first show that the comparability of warps (and wefts) is preserved by an isotopy. 

\begin{lem}
\label{lem:cross}
Let $L_{k} = K_{1k} \cup \dots \cup K_{mk} \cup K'_{1k} \cup \dots \cup K'_{nk}$ for $k=0,1$ and $F$ be as above. 
Then the two warps $K_{i_{0}0}$ and $K_{i_{1}0}$ are comparable 
if and only if the two warps $K_{i_{0}1}$ and $K_{i_{1}1}$ are comparable. 
\end{lem}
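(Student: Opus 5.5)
We find an isotopy invariant of the pair of warps $K_{i_0k}, K_{i_1k}$ (with the remaining components) that detects comparability. Two warps are comparable exactly when they can be separated by a surface. If $c(i_0,\cdot)\le c(i_1,\cdot)$, the annulus $\{x\}\times S^1\times I$ for $x$ strictly between the two warps can be pushed near the warp $K_{i_0}$, which is below $K_{i_1}$ at every weft. Concretely, we take a vertical annulus $A$ that runs parallel to the warps. It passes just below $K_{i_0}$ at the wefts where $K_{i_0}$ is below, and just above where it is above, so that $A$ is disjoint from all wefts as well. The annulus $A$ then separates $K_{i_0}$ from $K_{i_1}$ inside $T^2\times I$ minus the other components, with slope $(0,1)$ on $T^2\times\{0\}$.

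We therefore make the following claim. The warps $K_{i_0}$ and $K_{i_1}$ are comparable if and only if there is a properly embedded annulus $A\subset T^2\times I$, disjoint from $L$, whose boundary consists of vertical curves on $T^2\times\{0\}$ and $T^2\times\{1\}$, and such that $K_{i_0}$ and $K_{i_1}$ are not both on one side of $A$ in the region between $A$ and the opposite copy of itself. To make this precise, we cut $T^2\times I$ along two such annuli, or alternatively lift to the cover $S^1\times\bbR\times I$ corresponding to the vertical direction, and ask that the two warps lie in different complementary components. This property is manifestly preserved by ambient isotopy, since $F_t$ carries such annuli to such annuli and can be taken to restrict to isotopies on the boundary tori. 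The slope condition is preserved because $F_1$ is isotopic to the identity. The lemma then follows from the claim applied to $L_0$ and to $L_1$.

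The forward direction of the claim is the construction above. The converse is the main obstacle. Suppose such an annulus $A$ exists but $c(i_0,j)<c(i_1,j)$ while $c(i_0,j')>c(i_1,j')$. Then the weft $K'_{j}$ crosses the column between the two warps, going over $K_{i_0}$ and under $K_{i_1}$ as seen from the appropriate side, and $K'_{j'}$ does the opposite. I would use intersection and linking arguments in the infinite cyclic cover in the horizontal direction, $\bbR\times S^1\times I$. In that cover, $A$ lifts to annuli separating the lifts of the two warps. The weft $K'_j$ lifts to a line that must cross $A$ algebraically zero times yet is disjoint from it.

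To obtain a contradiction, I would take a meridian-like arc, a vertical segment from $T^2\times\{0\}$ to $T^2\times\{1\}$ at the crossing column between the two warps, and compare its intersection pattern with the wefts on each side. Alternatively, I would put $A$ in normal position with respect to the natural decomposition of the complement into pieces around crossings and argue combinatorially. Along $A\cap(S^1\times\{y_j\}\times I)$ the annulus must pass entirely below or above $K'_j$ relative to the two warps, which forces the order relation between $c(i_0,j)$ and $c(i_1,j)$ to be the same for all $j$. Making this last step rigorous, by isotoping $A$ to be transverse and standard near each weft, is where I expect most of the work to lie.
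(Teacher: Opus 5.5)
There is a genuine gap: the central claim about separating annuli is false, because such annuli never exist once $n\geq 1$. Let $A\subset T^{2}\times I$ be a properly embedded annulus with one boundary circle a $(0,1)$-curve on $T^{2}\times\{0\}$ and the other a $(0,1)$-curve on $T^{2}\times\{1\}$. The group $H_{2}(T^{2}\times I)$ is generated by a boundary torus, so the boundary map $H_{2}(T^{2}\times I,\partial(T^{2}\times I))\to H_{1}(\partial(T^{2}\times I))$ is injective, and hence $[A]=\pm[\{x_{0}\}\times S^{1}\times I]$. A weft represents $(1,0)\in H_{1}(T^{2}\times I)$, so its algebraic intersection number with $A$ is $\pm 1$. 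You assert it is $0$ for the lifted weft, but it is not, and the weft cannot be disjoint from $A$.

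Concretely, in your forward construction each weft passes transversely through $\{x\}\times S^{1}\times I$ at a height strictly between $0$ and $1$. No rerouting of the annulus above or below the warps removes that intersection. So the right-hand side of your claim is never satisfied, while comparable pairs of warps certainly exist (in an $m\times 1$-weave every pair of warps is comparable). The claim is therefore false, and the invariance argument built on it proves nothing. Independently of this, the converse direction, which you identify as the main step, is left as a sketch.

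The missing idea is that the crossing data are themselves isotopy invariants, so no separating surface is needed. For oriented components $K,K'$ of a link in $T^{2}\times I$, take the sum of the signs of the crossings at which $K$ passes over $K'$ in a diagram on $T^{2}$. This sum is unchanged under Reidemeister moves, and hence under ambient isotopy:
\begin{itemize}
\item an R2 move between $K$ and $K'$ creates or deletes two crossings of opposite sign with the same over-strand;
\item R3 preserves all over/under data and signs;
\item R1 involves a single component.
\end{itemize}
In a weaving diagram a warp and a weft cross exactly once, so this number is $\pm 1$ if the warp is above the weft and $0$ otherwise. Therefore $K_{i0}$ is above $K'_{j0}$ if and only if $K_{i1}$ is above $K'_{j1}$. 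If $K'_{j1}$ is the $\sigma(j)$-th weft of $L_{1}$, the crossing function of $K_{i0}$ equals that of $K_{i1}$ composed with $\sigma$. Pointwise comparability of two functions is unaffected by precomposing both with the same bijection $\sigma$, which gives the lemma. This is exactly the paper's two-line proof.
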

\begin{proof}
For two oriented components $K$ and $K'$ of a link in $T^{2} \times I$, 
the linking number of $K$ over $K'$ is defined as the sum of the signs of the crossings of $K$ above $K'$ in a diagram on $T^{2}$. 
The linking number is invariant under the Reidemeister moves, which is defined as usual, 
and so under the isotopies of a link. 
Hence the warp $K_{i0}$ is above the weft $K'_{j0}$ 
if and only if the warp $K_{i1}$ is above the weft $K'_{j1}$. 

Suppose that $\sigma \colon \{ 1, \dots, n \}  \to \{ 1, \dots, n \}$ is the permutation such that $K'_{j1}$ is the $\sigma (j)$-th weft for each $j \in \{ 1, \dots, n \} $. 
Then the crossing function of $K_{i0}$ is the composite of the crossing function of $K_{i1}$ and $\sigma$. 
Hence comparability of warps is preserved. 
\end{proof}

To describe the interchangeability of warps precisely, 
we consider the preimages of weaves by an infinite cyclic covering map $\pi \colon \bbR \times S^{1} \times I \to T^{2} \times I$. 
We say that a component of the preimage of a weave is also a warp or a weft. 
We define the comparability of two warps in $\bbR \times S^{1} \times I$ with respect to the wefts in the same way.

\begin{lem}
\label{lem:square}
Let $\widetilde{W}'_{2} = \bbR \times \{ 1/4, 3/4 \} \times \{ 1/2 \} \subset \bbR \times S^{1} \times I$ for $i=1,2$. 
Suppose that $W_{0}$ and $W_{1}$ are the links in $\bbR \times S^{1} \times I \setminus \widetilde{W}'_{2}$ shown in the left of Figure~\ref{fig:non-comparable.pdf}. 
Then $W_{0}$ and $W_{1}$ are not isotopic in $\bbR \times S^{1} \times I \setminus \widetilde{W}'_{2}$. 
Consequently, if two warps in $\bbR \times S^{1} \times I \setminus \widetilde{W}'$ are not comparable with respect to the wefts $\widetilde{W}' = \bbR \times \{ n \text{ points} \} \times \{ 1/2 \} \subset \bbR \times S^{1} \times I$, 
they cannot be interchanged in $\bbR \times S^{1} \times I \setminus \widetilde{W}'$. 
\end{lem}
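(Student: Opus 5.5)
The plan is to separate $W_{0}$ and $W_{1}$ by an invariant that uses the two ends $x \to \pm\infty$ of $\bbR \times S^{1} \times I$. I read the left of Figure~\ref{fig:non-comparable.pdf} as follows. There are two warps $A$ and $B$. The warp $A$ passes over the weft at height $1/4$ and under the weft at $3/4$, and $B$ does the opposite. In $W_{0}$, $A$ lies to the left of $B$; in $W_{1}$ the two are interchanged.

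A naive reduction fails. Compactifying $\bbR$ to $S^{1}$ turns both pictures into $2\times 2$-weaves in $T^{2}\times I$ that differ only by a translation, so they become isotopic. The proof must therefore genuinely use the fact that $x$ ranges over $\bbR$ and is not closed up. The $\bbR$-coordinate does give a meaningful left/right order, since an ambient isotopy of a non-compact manifold with compact support near the link cannot exchange the two ends.

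\textbf{Fundamental group computation.} Let $M = \bbR \times S^{1} \times I \setminus \widetilde{W}'_{2}$ and $N_{k} = M \setminus W_{k}$. Choose a basepoint far to the left, at $x \ll 0$. An isotopy between $W_{0}$ and $W_{1}$ may be taken with compact support, so it fixes a neighbourhood of the left end. It then induces an isomorphism $\pi_{1}(N_{0}) \to \pi_{1}(N_{1})$ that preserves the following data:
\begin{itemize}
\item the peripheral classes of the wefts near the left end;
\item the classes of the warps, defined as loops joined to the basepoint by arcs in a slice $\{x\}\times S^{1}\times I$ on the left;
\item the class $t$ of the core circle $S^{1}$ near $I \times \{0\}$.
\end{itemize}
I will compute $\pi_{1}(N_{k})$ by a Wirtinger-type presentation from the diagram on $\bbR \times S^{1}$, reading it from left to right. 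The underlying group $\pi_{1}(M)$ is free on $t$ and the two weft meridians $a$, $b$.

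In $W_{0}$ the warp $A$ is reached from the left without crossing anything. Its class is a word $w_{A}(t,a,b)$, namely $t$ twisted according to passing over $a$ and under $b$. In $W_{1}$, the arc to $A$ must first pass $B$, so the class of $A$ becomes $w_{A}$ conjugated or twisted by the meridian $\beta$ of $B$. The point is that the weft meridians seen to the right of $B$ are conjugates of $a$ and $b$ by $\beta$, because $B$ links the two wefts in opposite senses.

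\textbf{The main obstacle.} The hard step is to prove that the resulting peripheral systems of $N_{0}$ and $N_{1}$ are not related by any such isomorphism. The first attempt will be to map both groups to a computable quotient. Candidates are the infinite cyclic cover determined by the total linking with the wefts, i.e.\ an Alexander-module or Fox-calculus computation, or a small non-abelian finite group such as $S_{3}$. In that quotient I will check that, in $W_{0}$, the relation ``meridian of $A$ commutes with the left-end longitude data'' holds, and that it fails in $W_{1}$ because of the extra conjugation by $\beta$.

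An alternative, more geometric route would be worth trying if the algebra becomes messy. In $W_{0}$ there is a twice-punctured annulus $\{x\}\times S^{1}\times I$ separating $A$ from the right end and from $B$. In $W_{1}$ no such incompressible surface can exist, because $B$ pierces the strip $\bbR\times[1/4,3/4]\times\{1/2\}$ with the opposite sign to $A$ and sits to the left.

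\textbf{The consequence.} Suppose two warps are not comparable with respect to $\widetilde{W}'$. Then there are wefts $j_{1}, j_{2}$ with $c(i_{0},j_{1})>c(i_{1},j_{1})$ and $c(i_{0},j_{2})<c(i_{1},j_{2})$. If an interchange of the warps existed in $\bbR \times S^{1}\times I \setminus \widetilde{W}'$, it would in particular be an isotopy in the larger space obtained by deleting only the wefts $j_{1}$ and $j_{2}$ and forgetting the others. After a reparametrization of the $S^{1}$-coordinate placing these two wefts at $1/4$ and $3/4$, and forgetting the other warps in the same way, this yields an isotopy from $W_{0}$ to $W_{1}$, which contradicts the first part.
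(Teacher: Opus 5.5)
\paragraph{The gap.}
Several parts of your proposal are sound:
\begin{itemize}
\item the reduction in your last paragraph (forget the other wefts and warps, then reparametrise $S^{1}$), which matches the paper's brief reduction;
\item the observation that closing up $\bbR$ erases the distinction, so an end has to be used;
\item compact support for the isotopy, which is available by isotopy extension.
\end{itemize}
The gap is the first assertion itself. Nothing in the proposal proves that $W_{0}$ and $W_{1}$ are not isotopic in $\bbR \times S^{1} \times I \setminus \widetilde{W}'_{2}$. After setting up a presentation, you label the decisive comparison ``the main obstacle''. You then only list candidate quotients, plus a relation (``meridian of $A$ commutes with the left-end longitude data'') that is neither defined nor verified. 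The geometric alternative, that in $W_{1}$ no incompressible twice-punctured annulus separates $A$ from $B$ and the right end, restates the lemma rather than arguing for it.

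This step needs care. The half-turn $(x,y,z) \mapsto (x,y+1/2,z)$ preserves $\widetilde{W}'_{2}$, swaps its two components, and carries $W_{0}$ onto $W_{1}$. So $N_{0} \cong N_{1}$, and the rotation $(x,y,z) \mapsto (x,y+s/2,z)$ even isotopes $W_{0} \cup \widetilde{W}'_{2}$ onto $W_{1} \cup \widetilde{W}'_{2}$ in $\bbR \times S^{1} \times I$. Hence no invariant of the complement alone (an Alexander module, a count of $S_{3}$-representations) can work; you must use the labelled wefts together with an end. Your sentence that both weft meridians are conjugated when passing $B$ is also wrong. $B$ cuts the level slice into two pieces, each containing one weft puncture, so exactly one weft meridian is conjugated.

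\paragraph{How the paper closes it, and how you could.}
The paper attaches two 2-handles. This embeds $\bbR \times S^{1} \times I \setminus \widetilde{W}'_{2}$ in the complement of an unknot $U \subset S^{3}$, with the wefts closing up to $U$, so that $W_{0} \cup U$ is the 3-component unlink and $W_{1} \cup U$ is the Borromean rings. An isotopy from $W_{0}$ to $W_{1}$ would make these two links isotopic, which is impossible.

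Within your own framework the step can be finished as follows.
\begin{enumerate}
\item Put the basepoint near $z=0$ at the far left. In $N_{0}$ a push-off of $A$ slides left to the far-left level curve of type $A$. An isotopy fixed near the left end carries this free homotopy into $N_{1} \subset M \setminus B$.
\item For $W_{1}$, the group $\pi_{1}(M \setminus B)$ is generated by $t$, the weft meridians $a,b$, and the meridian $\mu$ of $B$. The only relation is that $\mu$ commutes with the class $\lambda$ of a type-$B$ level curve. Since $\{t,\lambda\}$ is a basis of $\langle t,b \rangle$, the group is $\langle a \rangle * \langle t \rangle * \langle \lambda,\mu \mid [\lambda,\mu] \rangle$.
\item Passing the level of $B$ replaces $a$, the meridian of the weft that $B$ passes under, by $\mu^{\pm 1} a \mu^{\mp 1}$.
\item Killing the far-left type-$A$ curve sets $a = t^{\pm 1}$. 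The warp $A$ of $W_{1}$ then becomes a commutator $[t,\mu^{\pm 1}]$, which is nontrivial in $\langle t \rangle * \langle \lambda,\mu \mid [\lambda,\mu] \rangle$. So the homotopy from step 1 cannot exist.
\end{enumerate}
A concrete computation of this kind is what your proposal is missing.
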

\begin{proof}
By attaching two 2-handles, 
we embed $\bbR \times S^{1} \times I \setminus \widetilde{W}'_{2}$ into the complement of the unknot $U$ in the 3-sphere $S^{3}$ as shown in the right of Figure~\ref{fig:non-comparable.pdf}. 
Then $W_{0} \cup U$ is the 3-component unlink, and $W_{1} \cup U$ is the Borromean rings. 
Hence $W_{0}$ and $W_{1}$ are not isotopic. 

If two warps in $\bbR \times S^{1} \times I \setminus \widetilde{W}'$ are not comparable, 
there is the configuration of $W_{0}$ or $W_{1}$. 
Hence these two warps cannot be interchanged in $\bbR \times S^{1} \times I \setminus \widetilde{W}'$. 
\end{proof}

\fig[width=12cm]{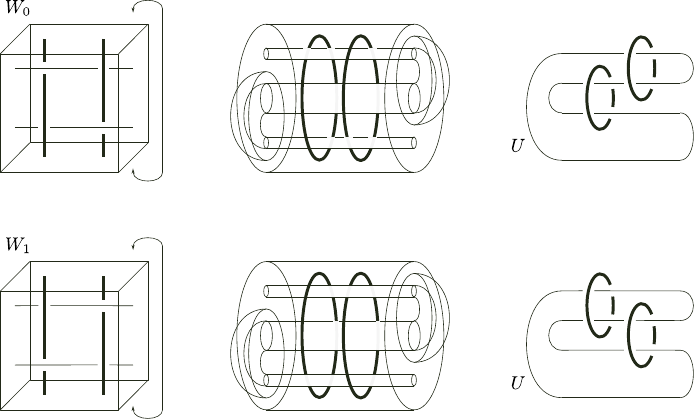}{The links $W_{0}$ and $W_{1}$ in $\bbR \times S^{1} \times I \setminus \widetilde{W}'_{2}$}

To arrange the warps and wefts in sequence, 
we show that if two warps are not comparable, 
they cannot be interchanged even by an ambient isotopy that does not fix the wefts. 

For a topological space $M$, let $\Homeo(M)$ denote the space of homeomorphisms on $M$ with the compact-open topology. 
For $S_{1}, S_{2} \subset M$, let $\Homeo(M, S_{1}; S_{2})$ denote the subspace of $\Homeo(M)$ consisting of the homeomorphisms that preserve $S_{1}$ setwise and fix $S_{2}$ pointwise. 
We omit $S_{i}$ if it is empty. 

An ambient isotopy $F$ on $T^{2} \times I$ fixing $\partial (T^{2} \times I)$ is lifted to a unique ambient isotopy $\widetilde{F}$ on $\bbR \times S^{1} \times I$ fixing $\partial (\bbR \times S^{1} \times I)$  such that $\pi \circ \widetilde{F}_{t} = F_{t} \circ \pi$ for each $0 \leq t \leq 1$. 
We recall that $p_{1} \colon T^{2} \times I \to S^{1}$ is the projection to the first coordinate. 
Let $\tilde{p}_{1} \colon \bbR \times S^{1} \times I \to \bbR$ also denote the projection to the first coordinate.

\begin{lem}
\label{lem:comparable}
Let $L_{k} = K_{1k} \cup \dots \cup K_{mk} \cup K'_{1k} \cup \dots \cup K'_{nk}$ for $k=0,1$ and $F$ be as above. 
For $1 \leq i \leq m$, let $\widetilde{K}_{i0}^{s} \subset \bbR \times S^{1} \times I \ (s \in \bbZ)$ denote the components of $\pi^{-1}(K_{i0})$. 
Let $\widetilde{K}_{i1}^{s} = \widetilde{F}_{1}(\widetilde{K}_{i0}^{s})$ by the lift $\widetilde{F}$ of $F$, which is a component of $\pi^{-1}(K_{i1})$. 
Let $a(i, s) = \tilde{p}_{1}(\widetilde{K}_{i0}^{s}), b(i, s) = \tilde{p}_{1}(\widetilde{K}_{i1}^{s}) \in \bbR$. 
Let $i_{0}, i_{1} \in \{ 1, \dots, m \}$ and $s_{0}, s_{1} \in \bbZ$. 
If $a(i_{0}, s_{0}) < a(i_{1}, s_{1})$ and $b(i_{0}, s_{0}) > b(i_{1}, s_{1})$, 
then the warps $K_{i_{0}0}$ and $K_{i_{1}0}$ are comparable. 
\end{lem}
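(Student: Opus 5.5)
The plan is to argue by contradiction and reduce to Lemma~\ref{lem:square}. Suppose $K_{i_{0}0}$ and $K_{i_{1}0}$ are not comparable. Then there are wefts $K'_{j0}$ and $K'_{j'0}$ such that $K_{i_{0}0}$ is above $K'_{j0}$ and below $K'_{j'0}$, while $K_{i_{1}0}$ is below $K'_{j0}$ and above $K'_{j'0}$. By the proof of Lemma~\ref{lem:cross} (invariance of the linking number of a warp over a weft), the same over/under pattern holds for $K_{i_{0}1}$, $K_{i_{1}1}$ with respect to $K'_{j1}$, $K'_{j'1}$. I will pass to the cyclic cover and forget everything except the four-component sublink $\widetilde{K}_{i_{0}k}^{s_{0}} \cup \widetilde{K}_{i_{1}k}^{s_{1}} \cup \widetilde{K'}_{jk} \cup \widetilde{K'}_{j'k}$. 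Here $\widetilde{K'}_{jk}$ is the lift of $K'_{jk}$, which is a properly embedded line $\bbR \times \{\text{pt}\} \times \{\text{height}\}$ for $k=0,1$. The lifted isotopy $\widetilde{F}$ carries the sublink for $k=0$ to the one for $k=1$. Since $a(i_{0},s_{0})<a(i_{1},s_{1})$ but $b(i_{0},s_{0})>b(i_{1},s_{1})$, the two warp lifts appear in opposite left--right orders at the two ends. Together with the non-comparability, this means the $k=0$ sublink is, up to an isotopy fixing the two weft lines, the configuration $W_{0}$ (say), and the $k=1$ sublink is $W_{1}$, or vice versa.

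The key steps are as follows. First, normalize the $k=0$ and $k=1$ sublinks by isotopies through weaves in the cover: slide the warp lifts in the $\bbR$-direction and the wefts in the $S^{1}$ and $I$ directions. After this, both configurations are exactly in the standard form of Figure~\ref{fig:non-comparable.pdf}, with the weft lines at $\bbR \times \{1/4,3/4\} \times \{1/2\}$. Moving non-adjacent components past each other is fine here, since only four components remain. Second, check combinatorially that reversing the left--right order of two non-comparable warps, while keeping their crossing data with the two wefts fixed, turns one picture of Figure~\ref{fig:non-comparable.pdf} into the other. Third, modify $\widetilde{F}$ so that it fixes the two weft lines throughout. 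This gives an isotopy in $\bbR \times S^{1} \times I \setminus \widetilde{W}'_{2}$ from $W_{0}$ to $W_{1}$, contradicting Lemma~\ref{lem:square}.

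The main obstacle is the third step, because $\widetilde{F}_{t}$ genuinely moves the wefts. I would first use that $F$ is an isotopy of the compact manifold $T^{2}\times I$, so $\widetilde{F}_{t}$ is $\bbZ$-equivariant and moves points only a bounded distance $D$. Next, truncate to a slab $[-R,R]\times S^{1}\times I$ with $R \gg D + |a| + |b|$; near its ends the weft lines stay within distance $D$ of straight lines. Then I would cap off exactly as in the proof of Lemma~\ref{lem:square}, attaching 2-handles far out to get the embedding into $S^{3}\setminus U$. The point is that the link type of the four warp/weft components together with $U$ (unlink versus Borromean rings) is determined by the configuration in the slab. It is unchanged by the bounded motion near the ends, which can be absorbed by an isotopy of the caps. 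It is also unchanged by $\widetilde{F}$, which is an ambient isotopy of the whole slab, weft lines included. Once the wefts have been capped off into closed components of a link in $S^{3}$, there is no need to fix them. The $k=0$ side caps to the unlink and the $k=1$ side to the Borromean rings (or vice versa), so they cannot be isotopic, and therefore the warps must be comparable.
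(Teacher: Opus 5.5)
Your first two steps are fine: reducing to two non-comparable warp lifts and two weft lines, and identifying the two ends with $W_{0}$ and $W_{1}$. The gap is in your third step, which you correctly flag as the crux. The justification you give for it does not work.

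\begin{itemize}
\item The displacement bound $D$ for the equivariant lift $\widetilde{F}_{t}$ controls only the $\bbR$-coordinate. Since $S^{1}\times I$ is compact, it says nothing about where the weft lines sit in the $(y,z)$-annulus. At intermediate times the curves $\widetilde{F}_{t}(\widetilde{K'}_{j0})$ are arbitrary $\bbZ$-periodic curves that can wind around each other all the way out to $\pm\infty$. For example, if $F$ contains a stage of the form $\mathrm{id}_{S^{1}}\times\hat g_{t}$, where $\hat g_{t}$ pushes one weft point around another in $S^{1}\times I$, then the endpoints of the truncated wefts on $\{\pm R\}\times S^{1}\times I$ braid around each other for every $R$.
\item $\widetilde{F}_{t}$ does not preserve the slab $[-R,R]\times S^{1}\times I$, so it is not ``an ambient isotopy of the slab''. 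Nothing in your argument produces an isotopy that is standard near $x=\pm R$, and any capping needs that.
\item The capping in Lemma~\ref{lem:square} is an embedding of the complement of the \emph{fixed} weft lines into $S^{3}\setminus U$. The wefts are not components of the resulting link: the unlink and the Borromean rings are the three-component links formed by the two warps and $U$. That embedding is not defined once the wefts move. If you instead keep the wefts as closed components, you get a different link, and ``unlink versus Borromean rings'' is no longer the comparison being made.
\end{itemize}

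The missing idea is a way to replace $F$ by an isotopy that fixes the wefts without changing the order data $b(i,s)$. The paper does this before passing to the cover.
\begin{itemize}
\item By Waldhausen's theorem, $\pi_{0}(\Homeo(T^{2}\times I, W';\partial(T^{2}\times I)))\cong\mathcal{CB}_{n}\times\bbZ$. So $[F_{1}]$ is represented by some $g=\mathrm{id}_{S^{1}}\times\hat g$.
\item Epstein's theorem then gives a slice-preserving $h$ that straightens each $g^{-1}(K_{i1})$ back to a warp.
\item The map $F'_{1}=h\circ g^{-1}\circ F_{1}$ is isotopic to the identity rel $W'\cup\partial(T^{2}\times I)$.
\item Since $g$ and $h$ preserve the first coordinate, the lifted positions $b(i,s)$ are unchanged, so Lemma~\ref{lem:square} applies directly with the wefts fixed.
\end{itemize}
Your proposal to ``absorb the end motion into the caps'' would require showing that near the ends the isotopy can be put in a product form $\mathrm{id}\times\hat g_{t}$. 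That is essentially this mapping-class-group input. As written, step three is asserted rather than proved.
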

\begin{proof}
By isotopy, 
we may assume that $W' = K'_{10} \cup \dots \cup K'_{n0} = K'_{11} \cup \dots \cup K'_{n1} = S^{1} \times P_{n}$, 
where $P_{n} \subset \inter(S^{1} \times I)$ consists of  $n$ distinct points. 
Note that $K'_{j0}$ and $K'_{j1}$ do not necessarily coincide. 
Moreover, we may assume that the ambient isotopy $F$ fixes the boundary $\partial (T^{2} \times I)$ pointwise, 
and $F_{1}|_{W'} \colon W' \to W'$ preserves the first coodinante of $T^{2} \times I$. 
Consequently, $F_{1} \in \Homeo(T^{2} \times I, W'; \partial (T^{2} \times I))$. 

Waldhausen's theorem \cite{Waldhausen68} implies that 
there is a natural isomorphism 
\[
\mathcal{CB}_{n} \times \bbZ \cong \pi_{0}(\Homeo(T^{2} \times I, W'; \partial (T^{2} \times I))), 
\] 
where $\mathcal{CB}_{n} = \pi_{0}(\Homeo(S^{1} \times I, P_{n}; \partial (S^{1} \times I)))$ is the $n$-strand annular (or circular) braid group, 
The inclusion $\mathcal{CB}_ {n} \hookrightarrow \pi_{0}(\Homeo(T^{2} \times I, W'; \partial (T^{2} \times I)))$ is induced by the natural inclusion 
\[
\begin{array}{ccc}
\Homeo(S^{1} \times I, P_{n}; \partial (S^{1} \times I)) & \longrightarrow & \Homeo(T^{2} \times I, W'; \partial (T^{2} \times I)) \\
\rotatebox{90}{$\in$} & & \rotatebox{90}{$\in$} \\
\hat{g} & \longmapsto & \mathrm{id}_{S^{1}} \times \hat{g}. 
\end{array}
\]
A generator of the $\bbZ$-factor is represented by a twist 
\[
\begin{array}{ccc}
T^{2} \times I & \longrightarrow & T^{2} \times I \\
\rotatebox{90}{$\in$} & & \rotatebox{90}{$\in$} \\
(x, y, z) & \longmapsto & (x+ z, y, z). 
\end{array}
\]
Since $F_{1}$ is isotopic to the identity of $T^{2} \times I$ fixing $\partial (T^{2} \times I)$, 
we may regard the mapping class $[F_{1}] \in \pi_{0}(\Homeo(T^{2} \times I, W'; \partial (T^{2} \times I)))$ as an element of $\mathcal{CB}_{n}$. 

Let $g = \mathrm{id}_{S^{1}} \times \hat{g} \in \Homeo(T^{2} \times I, W'; \partial (T^{2} \times I))$ such that $\hat{g} \in \Homeo(S^{1} \times I, P_{n}; \partial (S^{1} \times I))$ and $[g] = [F_{1}] \in \mathcal{CB}_{n}$. 
For each $1 \leq i \leq n$, 
the loop $g^{-1}(K_{i1}) = g^{-1} \circ F_{1}(K_{i0}) \subset (S^{1} \times I \setminus P_{n})_{i} = \{ (x,y,z) \in T^{2} \times I \setminus W' \mid x = p_{1}(K_{i1}) \}$ is isotopic (in particular, homotopic) to the warp $K_{i0}$ in $T^{2} \times I \setminus W'$, 
since $[g^{-1} \circ F_{1}] \in \mathcal{CB}_{n}$ is trivial. 
By projecting this homotopy to $(S^{1} \times I)_{i}$, 
we obtain a homotopy from the loop $g^{-1} \circ F_{1}(K_{i0})$ to a warp $K_{i2}$ (which is the projection of $K_{i0}$) in the $n$-punctured annulus $(S^{1} \times I \setminus P_{n})_{i}$. 
Due to Epstein \cite{Epstein66}, the simple closed curves $g^{-1} \circ F_{1}(K_{i0})$ and $K_{i2}$ are isotopic in $(S^{1} \times I \setminus P_{n})_{i}$. 
Hence there is $h \in \Homeo(T^{2} \times I; W' \cup \partial (T^{2} \times I))$ such that $h$ preserves the first coordinate of $T^{2} \times I$, $[h] \in \pi_{0}(\Homeo(T^{2} \times I; W' \cup \partial (T^{2} \times I)))$ is trivial, and $h(g^{-1} \circ F_{1}(K_{i0})) = K_{i2}$. 

Let $F'_{1} = h \circ g^{-1} \circ F_{1} \in \Homeo(T^{2} \times I; W' \cup \partial (T^{2} \times I))$. 
Since $[F'_{1}] \in \pi_{0}(\Homeo(T^{2} \times I; W' \cup \partial (T^{2} \times I)))$ is trivial, 
there is an ambient isotopy $F'$ to $F'_{1}$ that fixes the wefts $W'$ and the boundary $\partial (T^{2} \times I)$ pointwise. 
The ambient isotopy $F'$ is from $L_{0}$ to $K_{12} \cup \dots \cup K_{m2} \cup W'$. 
We take the lift $\widetilde{F}'$ so that it fixes the boundary. 
The homeomorphism $\widetilde{F}'_{1}$ is uniquely determined by $F'_{1}$. 
Since $F'_{1}$ is taken so that its projection to the first coordinate coincides with that of $F_{1}$, 
we have $\tilde{p}_{1}(\widetilde{F}'_{1}(\widetilde{K}_{i0}^{s})) = b(i, s)$. 
Since comparability of warps is preserved by Lemma~\ref{lem:cross}, 
we may assume that $F$ fixes the wefts $W'$ pointwise. 

If $a(i_{0}, s_{0}) < a(i_{1}, s_{1})$ and $b(i_{0}, s_{0}) > b(i_{1}, s_{1})$, 
then the warps $\widetilde{K}_{i_{0}0}^{s_{0}}$ and $\widetilde{K}_{i_{1}0}^{s_{1}}$ in $\bbR \times S^{1} \times I \setminus \pi^{-1}(W')$ can be interchanged by $\widetilde{F}$. 
By Lemma~\ref{lem:square}, $\widetilde{K}_{i_{0}0}^{s_{0}}$ and $\widetilde{K}_{i_{1}0}^{s_{1}}$ are comparable. 
Hence the warps $K_{i_{0}0}$ and $K_{i_{1}0}$ are also comparable. 
\end{proof}

\begin{proof}[Proof of Theroem~\ref{thm:isotopy}]
Let $L_{k} = K_{1k} \cup \dots \cup K_{mk} \cup K'_{1k} \cup \dots \cup K'_{nk}$ for $k=0,1$ and $F$ as above. 
Let $a(i, s), b(i, s) \in \bbR$ be as in Lemma~\ref{lem:comparable}. 
Suppose that $L_{1/2} = K_{1,1/2} \cup \dots \cup K_{m,1/2} \cup K'_{10} \cup \dots \cup K'_{n0}$ is an $m \times n$-weave such that $p_{1}(K_{i,1/2}) = [b(i, s)] \in S^{1}$, and the crossing function of $K_{i,1/2}$ coincides with that of $K_{i0}$ for each $1 \leq i \leq m$. 

We construct an isotopy from $L_{0}$ to $L_{1/2}$ via weaves $L_{t} = K_{1t} \cup \dots \cup K_{mt} \cup K'_{10} \cup \dots \cup K'_{n0}$ for $0 \leq t \leq 1/2$ that fixes the wefts. 
Let $e(i, s, t) = (1-2t) a(i, s) + 2t b(i, s)$ for $1 \leq i \leq m$, $s \in \bbZ$, and $0 \leq t \leq 1/2$. 
We consider the segments $\sigma (i, s) = \{ (e(i, s, t ), t) \in \bbR \times [0, 1/2] \}$. 
Two segments $\sigma (i_{0}, s_{0})$ and $\sigma (i_{1}, s_{1})$ with $a(i_{0}, s_{0}) < a(i_{1}, s_{1})$ intersect if and only if $b(i_{0}, s_{0}) > b(i_{1}, s_{1})$. 
By perturbing $a(i, s)$ and $b(i, s)$, we may assume that three of these segments do not intersect at a point. 

We take a continuous family $\widetilde{L}_{t} = (\bigsqcup_{1 \leq i \leq m, s \in \bbZ} \widetilde{K}_{it}^{s}) \sqcup \widetilde{W}'$ for $0 \leq t \leq 1/2$ such that 
\begin{itemize}
\item $\pi (\widetilde{L}_{0}) = L_{0}$ and $\pi (\widetilde{L}_{1/2}) = L_{1/2}$, 
\item each $\widetilde{K}_{it}^{s}$ is a warp, 
\item $\tilde{p}_{1} (\widetilde{K}_{it}^{s}) = e(i, s, t)$, and 
\item each $\widetilde{L}_{t}$ is preserved by the deck transformation of $\pi \colon \bbR \times S^{1} \times I \to T^{2} \times I$. 
\end{itemize}
If $e(i_{0}, s_{0}, t) = e(i_{1}, s_{1}, t) $, then the two segments $\sigma (i_{0}, s_{0})$ and $\sigma (i_{1}, s_{1})$ intersect. 
Since $K_{i_{0}0}$ and $K_{i_{1}0}$ are comparable by Lemma~\ref{lem:comparable}, 
the two warps $\widetilde{K}_{i_{0}t}^{s_{0}}$ and $\widetilde{K}_{i_{1}t}^{s_{1}}$ can be interchanged around the time $t$. 
Hence $L_{0}$ is isotopic to $L_{1/2}$ via weaves $L_{t} = \pi (\widetilde{L}_{t})$. 

In the same manner, $L_{1/2}$ is isotopic to $L_{1}$ via weaves. 
Hence $L_{0}$ is isotopic to $L_{1}$ via weaves. 
\end{proof}

\section{Essential tori in weave complements}
\label{section:torus}

In this section, we characterize the hyperbolic weaves. 
A link $L$ in $T^{2} \times I$ is \emph{hyperbolic} if the interior of $T^{2} \times I \setminus L$ admits a complete hyperbolic structure of finite volume. 
Since each component of a weave in $T^{2} \times I$ is homotopically non-trivial, 
a weave complement is irreducible (i.e., every embedded sphere bounds a ball). 
A compact orientable surface other than a sphere properly embedded in a compact orientable irreducible 3-manifold is \emph{essential} if it is incompressible, boundary-incompressible, and not boundary-parallel. 
Theorem~\ref{thm:hyperbolization} follows from Thurston's hyperbolization theorem for Haken 3-manifolds \cite{Thurston82}. 
The assumption excludes exceptionally simple Seifert fibered 3-manifolds such as the product of a 3-punctured sphere and a circle, in which there does not exist an essential torus, but there exists a $\pi_{1}$-injective immersed torus. 

\begin{thm}
\label{thm:hyperbolization}
Let $M$ be a compact orientable irreducible 3-manifold with boundary consisting of at least four tori. 
Then the interior $M$ admits a complete hyperbolic structure of finite volume 
if and only if there does not exist an essential torus in $M$. 
\end{thm}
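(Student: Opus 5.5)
The plan is to deduce Theorem~\ref{thm:hyperbolization} from Thurston's hyperbolization theorem for Haken manifolds \cite{Thurston82}, together with the standard classification of Seifert fibered spaces that contain no essential torus. The boundary condition is what rules out the small exceptional cases.

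\emph{Setting up $M$ as Haken.} Since $M$ is compact, orientable and irreducible with nonempty boundary, it is Haken. The boundary tori are incompressible: if one of them compressed, then by irreducibility $M$ would be a solid torus. That would force $\partial M$ to be a single torus, contradicting the assumption of at least four boundary tori.

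\emph{Forward direction.} Suppose the interior of $M$ carries a complete hyperbolic structure of finite volume, and let $T$ be an incompressible torus in $M$. Then $\pi_{1}(T) \cong \bbZ^{2}$ injects into $\pi_{1}(M)$. In a finite-volume hyperbolic manifold every $\bbZ^{2}$ subgroup is parabolic, so it is conjugate into a cusp subgroup. Standard arguments then show that $T$ is isotopic into a neighborhood of some boundary torus, so $T$ is boundary-parallel and hence not essential.

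\emph{Reverse direction.} Suppose $M$ has no essential torus. Thurston's theorem says that a Haken manifold is hyperbolic if it is atoroidal in the homotopy sense, i.e.\ contains no $\pi_{1}$-injective map of a torus that is not homotopic into the boundary. By the torus theorem (Jaco--Shalen, Johannson, Scott), if $M$ contains such a homotopically essential torus but no embedded essential torus, then $M$ is Seifert fibered. So the remaining task is to show that a Seifert fibered $M$ with at least four boundary tori contains an embedded essential torus.

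\emph{The Seifert fibered case.} This is the step I expect to be the main obstacle, since it is exactly where the hypothesis on $\partial M$ is used. Suppose $M$ is Seifert fibered with base orbifold $B$; as $M$ is orientable, $B$ is orientable. Since $M$ has at least four boundary components, $B$ has at least four boundary circles. Take a simple closed curve $\gamma$ in $B$ that avoids the cone points and separates two boundary circles from the other two or more. Then neither complementary piece of $\gamma$ is a disk with at most one cone point, nor an annulus without cone points.

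The preimage of $\gamma$ is a vertical torus $T$. Vertical tori over essential curves of the base are incompressible. Moreover, $T$ is not boundary-parallel, because $\gamma$ is not parallel to a single boundary circle. Hence $T$ is essential, which contradicts the assumption. Therefore $M$ is not Seifert fibered, so it is atoroidal in the homotopy sense, and Thurston's theorem gives the complete hyperbolic structure of finite volume.

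Finally, the hypothesis is sharp: with exactly three boundary tori, the product of a pair of pants and $S^{1}$ is a counterexample, consistent with the remark preceding the statement.
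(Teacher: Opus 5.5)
Your proposal is correct and follows the paper's route. The paper simply invokes Thurston's hyperbolization theorem for Haken manifolds and remarks that the four-boundary-tori hypothesis excludes small Seifert fibered spaces such as (pair of pants)$\times S^{1}$; you fill in the torus-theorem reduction and the vertical-torus argument that the paper leaves implicit.

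Two small points need correcting, though neither breaks the argument. First, an orientable Seifert fibered space can have a non-orientable base (e.g.\ the twisted $I$-bundle over the Klein bottle fibers over the M\"obius band), but your curve $\gamma$ is separating, hence two-sided, so its preimage is still a torus. Second, the homotopy-atoroidal form of Thurston's theorem you quote has exceptions, $T^{2}\times I$ and the twisted $I$-bundle over the Klein bottle, and it is your boundary count that rules these out.
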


A link $L$ in $T^{2} \times I$ is \emph{layered} if there is a torus $T$ (called a \emph{layering} torus) in $T^{2} \times I \setminus L$ such that $T$ is boundary-parallel in $T^{2} \times I$ but not boundary-parallel in $T^{2} \times I \setminus L$. 
Since a layering torus is essential in the complement, a layered link in $T^{2} \times I$ is not hyperbolic. 
The weave shown in the left of Figure~\ref{fig:nh_weave.pdf} is layered. 
For more details of layered weaves, we refer the reader to \cite{KNSS26}. 

Two adjacent components of a weave $L$ in $T^{2} \times I$ are \emph{parallel} if their crossing functions coincide. 
In this case, there exists an essential torus in $T^{2} \setminus L$ that surrounds the two components as shown in the right of Figure~\ref{fig:nh_weave.pdf}, and so $L$ is not hyperbolic. 
We show that it is sufficient to consider these two obstructions for hyperbolicity. 

\fig[width=12cm]{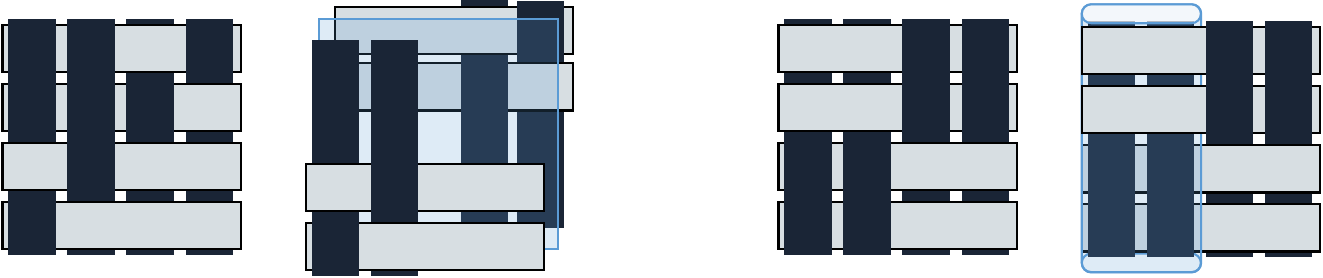}{Non-hyperbolic weaves}

\begin{thm}
\label{thm:hyp_weave}
An $m \times n$-weave $L$ in $T^{2} \times I$ for $m, n \geq 1$ is hyperbolic if and only if 
\begin{itemize}
\item $L$ is not layered, and 
\item $L$ has no pair of parallel components even after interchanging adjacent comparable components. 
\end{itemize}
\end{thm}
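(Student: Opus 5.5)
The "only if" direction is immediate. A layering torus is essential in the complement. By Corollary~\ref{cor:isotopy}, interchanging adjacent comparable components does not change the isotopy class. So if some weave isotopic to $L$ by such interchanges has a parallel pair, the torus surrounding the pair (Figure~\ref{fig:nh_weave.pdf}) is essential, and $L$ is not hyperbolic.

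For the converse, let $M = T^{2}\times I \setminus N(L)$. Since $m,n\ge 1$, $\partial M$ has at least four tori and $M$ is irreducible. By Theorem~\ref{thm:hyperbolization} it therefore suffices to show that every essential torus $T\subset M$ is either a layering torus or a torus surrounding a parallel pair, possibly after interchanges.

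To analyze $T$, I would decompose $T^{2}\times I$ along the vertical annuli $A_i=\{x_i\}\times S^{1}\times I$ containing the warps and the horizontal annuli $A'_j=S^{1}\times\{y_j\}\times I$ containing the wefts. This cuts $M$ into the crossing neighbourhoods and the complementary boxes, which are balls (squares $\times I$) meeting $L$ only on their sides. I would put $T$ in normal position with respect to this cell structure, in the style of Menasco's analysis for alternating links. First minimize $|T\cap(\bigcup A_i\cup\bigcup A'_j)|$. Using irreducibility and incompressibility of $T$, remove trivial circles of intersection. Using the fact that each punctured annulus $A_i\setminus L$ is incompressible in $M$, with boundary on $\partial(T^{2}\times I)$, show that the remaining curves in each $A_i\setminus L$ are essential. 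Such a curve is either a core curve of the annulus, i.e.\ parallel to a warp and separating the wefts crossing $A_i$ into those above and below, or an arc-like curve encircling a consecutive group of the weft-punctures.

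Next I would split into cases. If $T$ misses all the $A_i$ and $A'_j$, it lies in a box and is compressible, so this case cannot occur. If the curves on all annuli are core curves, then $T$ is horizontal. Reassembling the pieces, $T$ meets each $A_i$ in one level curve and each $A'_j$ likewise, and the levels must be consistent at every crossing. This gives a torus separating $L$ into two weaves, i.e.\ a layering torus, and this contradicts the hypothesis unless $T$ is boundary-parallel. Otherwise some curve of $T\cap A_i$ encircles weft punctures; say $T$ crosses $A_i$ in a curve surrounding wefts $j_1,\dots,j_k$. Following $T$ across adjacent boxes forces $T$ to be a tube around a union of consecutive parallel-running components. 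Such a tube follows a family of wefts (or warps) across every warp, and at every warp these wefts must be all above or all below it. This is exactly the statement that the enclosed wefts have identical crossing functions once we allow them to be brought together. The crossings of the tube with the warps in between are where interchanges of comparable components occur, and Lemma~\ref{lem:comparable} guarantees that reordering is realized by interchanges.

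The main obstacle is this last case, and specifically the interplay between the two families of annuli. A tube around wefts $K'_{j}$ and $K'_{j'}$ that are not adjacent must be shown to come from adjacency after interchanges, and mixed tori that enclose both warps and wefts must be ruled out. I would try first to show that a normal torus meeting both families in non-core curves produces a compressing disk at some crossing, since a saddle would pass from over to under. For the reordering I would use the Borromean-type obstruction of Lemma~\ref{lem:square} in the cyclic cover $\bbR\times S^{1}\times I$, where the tube lifts to an annulus in the complement.
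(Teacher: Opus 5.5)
\textbf{There is a genuine gap in the converse, at exactly the step that carries the theorem.} Your ``only if'' direction is fine.

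Putting $T$ in general position with respect to the annuli $A_i$, $A'_j$ and classifying the curves of $T\cap A_i$ only gives local information. The claim that ``following $T$ across adjacent boxes forces $T$ to be a tube around a union of consecutive parallel-running components'' is asserted, not derived. You also leave as an open obstacle the exclusion of tori that mix tubes around warps with tubes around wefts, or that pass from one level to another, with only a hoped-for compressing-disk argument in support. A curve-by-curve analysis does not control how the pieces in neighbouring boxes fit together, so a global ingredient is needed.

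There is also a smaller problem. Curves of $T\cap A_i$ parallel to the warp $K_i$ are boundary-parallel in $A_i\setminus N(L)$ but cannot be isotoped away; the torus around two parallel warps meets $A_i$ in such curves. So ``the remaining curves are essential'' is not right as stated.

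\textbf{The missing idea is an Euler characteristic count in normal position.} A closed normal surface can only meet the segments into which the points of $L$ cut the vertical lines $\{pt\}\times I$ where faces meet. Each such segment lies in four cells, so the cell structure induced on $T$ is $4$-valent. Then $\chi(T)=0$ gives $f=v$ and $\sum_k k f_k=4v$, so every normal disk is a quadrilateral.

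The paper refines your decomposition by the faces $x=i/m$ and $y=j/n$, so that each cube meets $L$ in two arcs on adjacent faces and collapses to a tetrahedron. Of its seven normal disk types, five are rectangles and two are hexagons. The count therefore excludes hexagons, and the remaining rectangles are horizontal disks or half-tubes around a single warp arc or a single weft arc. Warp-tubes and weft-tubes cannot coexist in a connected $T$. Hence Lemma~\ref{lem:torus} gives $T=\{(x,y,z)\mid (x,z)\in C\}$ or $T=\{(x,y,z)\mid (y,z)\in C\}$ after an isotopy preserving the projection of $L$. The same count also works in your box decomposition, where the quadrilaterals are again the horizontal graph-disks and the half-tubes around one arc.

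From this product structure the conclusion is immediate:
\begin{itemize}
\item If $C$ is essential in the annulus, $T$ is a layering torus.
\item If $C$ bounds a disk $D$, then $T$ encloses, say, only warps. Every weft projects into the $(x,z)$-annulus as a closed curve missing $D$, so it passes entirely above or entirely below $D$. This makes the enclosed warps' crossing functions equal.
\item Any non-enclosed warp lying between enclosed ones projects to a vertical segment missing $D$. So it is comparable with them and can be interchanged away.
\end{itemize}

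Lemmas~\ref{lem:comparable} and~\ref{lem:square} are not what is needed here. They derive comparability from an isotopy that swaps two warps. Here the comparability of the intervening components comes directly from the product structure of $T$.
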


Note that every $m \times 1$-weave is layered, and so it is not hyperbolic. 

\begin{cor}
\label{cor:no_ac}
Let $L$ be an $m \times n$-weave in $T^{2} \times I$ for $m, n \geq 2$. 
Suppose that no pairs of adjacent components of $L$ are comparable. 
Then $L$ is hyperbolic. 
\end{cor}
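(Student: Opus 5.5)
Corollary~\ref{cor:no_ac} will follow from Theorem~\ref{thm:hyp_weave} once both of its conditions are checked under the hypothesis that no two adjacent components are comparable.

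\emph{No parallel pair.} Parallel components have equal crossing functions, so they are comparable. By hypothesis no adjacent pair in $L$ is comparable, so $L$ has no parallel adjacent pair. Since no adjacent pair is comparable, no interchange can be performed at all. Hence ``even after interchanging adjacent comparable components'' adds nothing, and the second condition holds.

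\emph{Not layered.} I would argue by contradiction. Suppose $T$ is a layering torus, so $T$ splits $T^{2}\times I$ into $T^{2}\times[0,1/2]$ and $T^{2}\times[1/2,1]$, with parts $L_{-}$ and $L_{+}$ of $L$, both nonempty. I take ``layered'' to mean, as in the introduction, that $L$ is separated into two weaves. To justify that reading, I would use Theorem~\ref{thm:isotopy} together with an isotopy placing $L_{\pm}$ as weaves in the two layers. Then every component of $L_{+}$ lies above every component of $L_{-}$.

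The key step is to show $L_{+}$ and $L_{-}$ each contain a warp or weft giving a comparable adjacent pair. First observe that each layer has both a warp and a weft. Indeed, if $L_{+}$ consisted only of wefts, then every weft in $L_{+}$ lies above all warps, because all warps are in $L_{-}$. Such wefts have constant crossing function, identically ``weft above''. Take one such weft and its neighbour weft $w$: the constant function is comparable to the crossing function of $w$, whatever it is. Since $n\ge 2$, this neighbour exists and is distinct, contradicting the hypothesis. Symmetric arguments handle the cases where $L_{+}$ has only warps, or $L_{-}$ has only one type.

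So some warp lies in $L_{+}$ and is above every weft in $L_{-}$. The argument is cleanest by taking extremes. A warp in $L_{+}$ is above all wefts of $L_{-}$, and a warp in $L_{-}$ is below all wefts of $L_{+}$. Hence on the $L_{-}$-wefts the crossing function of an $L_{+}$-warp is $1$, and on the $L_{+}$-wefts the crossing function of an $L_{-}$-warp is $0$. This alone does not force comparability of an $L_{+}$-warp and an $L_{-}$-warp; that is the main obstacle.

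To get past it, I would use the warps of $L_{+}$ alone. For any two warps in $L_{+}$, their crossing functions agree (both $=1$) on the $L_{-}$-wefts. They need not be comparable, though, so I would instead invoke the theorem's proof of non-hyperbolicity differently. Namely, the layering torus is an essential torus, and the corollary's hypothesis should contradict the existence of any essential torus; this is what Theorem~\ref{thm:hyp_weave}'s proof establishes via normal position.

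Concretely, I would take the essential-torus classification from the proof of Theorem~\ref{thm:hyp_weave}: essential tori are either layering tori or tori around parallel pairs after interchanges. For a layering torus, normal position forces the torus to pass between two adjacent components with all crossings on one side. Such a torus lies above the crossings of some pair of adjacent components, and those two components are then comparable. That contradicts the hypothesis and completes the argument; this last step is the one I expect to be hardest.
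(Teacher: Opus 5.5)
Your treatment of the second condition of Theorem~\ref{thm:hyp_weave} is correct. So is your reduction showing that each layer must contain both a warp and a weft; this subsumes the paper's case of a layering into an $m\times 0$- and a $0\times n$-weave. The gap is at the end of the layering argument. You assert that the crossing data you derived ``does not force comparability of an $L_{+}$-warp and an $L_{-}$-warp'' and abandon the direct argument. Instead you claim that normal position forces a layering torus to pass between two adjacent components ``with all crossings on one side'', which are ``then comparable''. Neither part of this is justified. Lemmas~\ref{lem:normal_closed} and~\ref{lem:torus} only put an essential torus into the form described in Lemma~\ref{lem:torus}, and they say nothing about comparability. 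As written, this step simply asserts the conclusion, so non-layeredness is not proved.

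The statement you rejected is in fact true, and it is what the paper uses. Let $K_{+}$ be a warp in $L_{+}$ and $K_{-}$ a warp in $L_{-}$. For a weft $j$ in $L_{-}$ you have $c(K_{+},j)=1\geq c(K_{-},j)$. For a weft $j$ in $L_{+}$ you have $c(K_{-},j)=0\leq c(K_{+},j)$. Hence $c(K_{-},\cdot)\leq c(K_{+},\cdot)$ at every weft. Comparability needs only this one-sided inequality, not agreement of the two functions.

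What remains is adjacency, which your last step never addresses, although the hypothesis concerns only adjacent pairs. You have shown both layers contain warps, and the warps are cyclically ordered around $S^{1}$. So some cyclically adjacent pair of warps is split by the layering torus, and that pair is comparable---a contradiction. This is the paper's two-line proof: either the layering is into an $m\times 0$- and a $0\times n$-weave, producing adjacent parallel components, or two adjacent components lie in different layers and are comparable.

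A side remark: to see that every component of $L_{+}$ passes over every component of $L_{-}$, you need neither Theorem~\ref{thm:isotopy} nor an unexplained isotopy making $L_{\pm}$ weaves. Isotope $T$ to a level torus and use the invariance of the over-linking number from the proof of Lemma~\ref{lem:cross}.
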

\begin{proof}
Since no pairs of adjacent components are comparable, 
$L$ has no pair of parallel components even after interchanging adjacent comparable components. 
Assume that $L$ is layered. 
If $L$ is layered into an $m \times 0$-weave and a $0 \times n$ weave, 
then there are adjacent parallel components. 
Otherwise, there are two adjacent components in different layers, which are comparable. 
This contradicts the assumption. 
\end{proof}

Let $L$ be an $m \times n$-weave in $T^{2} \times I$ that is projected to 
\[
\left\{ \left( \frac{2i-1}{2m} ,y \right) \in T^{2} \ \middle| \ 1 \leq i \leq m \right\} \cup \left\{ \left( x, \frac{2j-1}{2n} \right) \in T^{2} \ \middle| \ 1 \leq j \leq n \right\}. 
\]
We decompose $T^{2} \times I$ into $4mn$ cubes along the faces 
\[
\left\{ \left( \frac{i}{2m}, y, z \right) \in T^{2} \times I \ \middle| \ 1 \leq i \leq 2m \right\} \cup \left\{ \left( x, \frac{j}{2n}, z \right) \in T^{2} \times I \ \middle| \ 1 \leq j \leq 2n \right\}, 
\]
whose union contains $L$ as shown in Figure~\ref{fig:weave_decomp.pdf}. 
The intersection of $L$ and each cube is a union of two arcs in adjacent faces. 
We use a normal position of an essential torus with respect to this decomposition. 

\fig[width=12cm]{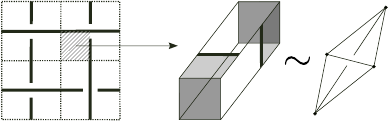}{A decomposition along surfaces containing a weave}

\begin{lem}
\label{lem:normal_closed}
Let $L$ be a weave in $T^{2} \times I$. 
Let $S$ be an essential closed surface in $T^{2} \times I \setminus L$. 
Then $S$ is isotopic in $T^{2} \times I \setminus L$ to a surface that is a union of disks 
each of which is one of the seven types shown in Figure~\ref{fig:normal1.pdf}. 
\end{lem}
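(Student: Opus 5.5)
We follow the standard normal surface argument of Menasco type, adapted to the decomposition of $T^{2} \times I$ into the $4mn$ cubes $Q$ of Figure~\ref{fig:weave_decomp.pdf}. Let $\Sigma$ be the union of the vertical faces $\{x = i/2m\}$ and $\{y = j/2n\}$; it contains $L$. First isotope $S$ to be transverse to $\Sigma$ and to the edges of the decomposition, missing the vertices. Then define the complexity of $S$ as the lexicographically ordered pair
\[
\bigl(\#(S \cap \text{vertical edges}),\ \#(\text{components of } S \cap \Sigma)\bigr).
\]
Among all surfaces isotopic to $S$ in $T^{2} \times I \setminus L$ and in such general position, choose one minimizing this complexity. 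The goal is to show that each component of $S \cap Q$ is a disk of one of the seven types.

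\textbf{Step 1: pieces are disks.} In each cube $Q$, the punctured cube $Q \setminus L$ is a ball minus two arcs lying in adjacent faces. Each such arc is boundary-parallel into the face that contains it, so $Q \setminus L$ is still a handlebody-like region in which the faces are compressing surfaces. Suppose $S \cap \Sigma$ contains a circle that bounds a disk in $\Sigma \setminus L$. Take an innermost such circle. Since $S$ is incompressible and $T^{2} \times I \setminus L$ is irreducible, we can isotope $S$ across a ball to remove that circle, which lowers the complexity. So no component of $S \cap \Sigma$ is an inessential circle in $\Sigma \setminus L$.

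Next consider a component of $S \cap Q$ that is not a disk. It has a compressing disk inside $Q \setminus L$. By incompressibility of $S$, the boundary of this disk bounds a disk on $S$. An innermost-disk argument then shows that this component is separated off by a sphere, which we remove. Hence every component of $S \cap Q$ is a disk whose boundary lies on $\partial Q \setminus L$.

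\textbf{Step 2: classify the boundary curves.} A normal disk's boundary is a curve in $\partial Q \setminus L$, and we analyze it by how it crosses the faces. The top and bottom faces lie on $\partial(T^{2} \times I)$, which $S$, being closed, does not meet. So $\partial(S \cap Q)$ lies in the union of the four vertical faces, which is an annulus $A$ punctured by the two arcs of $L$.

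We then rule out the following arcs of $\partial(S \cap Q) \cap F$, for $F$ a vertical face:
\begin{itemize}
\item arcs with both endpoints on the same edge (they can be pushed across that edge, reducing the edge count);
\item arcs returning to the same edge that enclose a piece of $L$ together with that edge (this would give a compression or boundary compression, using that $L$ meets $F$ in an arc parallel to the boundary of the face).
\end{itemize}
After these reductions, $\partial(S \cap Q)$ is a simple closed curve in the punctured annulus $A$. It either runs once around $A$, or is a short curve cutting off one corner edge or surrounding one arc of $L$. Enumerating the possible positions relative to the two arcs of $L$ (whether each arc lies above or below the curve, and the symmetries that swap the two adjacent faces) should yield exactly the seven types of Figure~\ref{fig:normal1.pdf}.

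\textbf{Main obstacle.} The hard step is the enumeration in Step 2. The key difficulty is excluding curves that wind around $A$ in a way that separates the two arcs of $L$ inconsistently with the way they cross one another. For such curves we would first try to exhibit an explicit isotopy of $S$ that reduces the complexity. Failing that, we would show that the disk together with a sub-disk of a face produces a compressing disk, or a sphere meeting $L$ once. The latter is impossible because each component of $L$ is homotopically non-trivial and the complement is irreducible.
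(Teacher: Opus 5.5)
There is a genuine gap in Step 2. Three of the four vertical edges of each cube contain points of $L$; the edge over the crossing contains two. An arc of $S\cap F$ with both endpoints on one vertical edge can therefore be pushed back across that edge only when the subarc it cuts off contains no point of $L$. Otherwise your isotopy passes through $L$.

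Arcs that turn back around a point of $L$ can neither be removed nor turned into compressions. They are exactly the arcs that appear in the rectangles around the warp arc or the weft arc and in the two hexagons, i.e.\ in four of the seven types. They also occur in genuine essential tori: the torus around two parallel warps meets each cube of the two outer columns in a rectangle around the warp arc. So your first bullet is invalid for these arcs, and your second bullet is false.

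If both bullets were granted, every boundary curve would be monotone around $A$, hence cross each vertical edge exactly once. Only three types would survive: the two horizontal squares and the rectangle passing between warp and weft at the crossing. That conclusion fails for the torus just mentioned. Your list of outcomes shows the same confusion. A curve ``cutting off a corner edge'' would have to cross the top and bottom faces, which $S$ misses. A curve around an arc of $L$ needs precisely the turn-back arcs you discarded. Also, if the curves you want to exclude in your last paragraph are those passing above the upper arc of $L$ and below the lower one, they must be kept: they form one of the two hexagonal types.

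The paper instead subdivides the vertical edges at the points of $L$. It regards each cube as a polyhedron $P$ with $8$ faces and $18$ edges, two of which make up $L\cap P$, and minimizes the number $w(S)$ of points of $S$ on these edges. This is numerically the first entry of your complexity; what changes is which moves are legal. After removing circles in faces, the decisive step is that each component $C$ of $S\cap\partial P$ meets each edge of $P$ at most once. Your ``main obstacle'' paragraph leaves exactly this open. If $C$ met an edge $e$ twice, two points of $C\cap e$ adjacent on $e$ would cut off a subarc $\alpha\subset e$ containing no point of $L$. Pushing $S$ across $e$ along a disk cobounded by $\alpha$ and an arc of $S$ then lowers $w(S)$.

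Incompressibility gives that $C$ bounds a disk of $S\cap P$. Collapsing the top, the bottom and the two arcs of $L$ then turns $P$ into a tetrahedron in which $C$ is a normal curve meeting each edge at most once. So $C$ is one of the four triangles or three quadrilaterals, which are the seven types. Without that bound, arbitrarily long curves that are normal with respect to the subdivided edges remain; these are the analogues of normal curves of length at least $8$ in a tetrahedron. No enumeration of positions relative to the two arcs can then close the argument.
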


\fig[width=12cm]{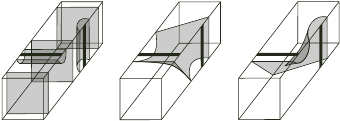}{Normal disks with respect to the decomposition for a weave}

The five disks in the left of Figure~\ref{fig:normal1.pdf} are rectangles, and the two disks in the center and right of Figure~\ref{fig:normal1.pdf} are hexagons. 
By collapsing the two squares in $\partial T^{2} \times I$ and the subsets of $L$ on the cube to four vertices, 
we obtain a tetrahedron as shown in the right of Figure~\ref{fig:weave_decomp.pdf}. 
Then the normal disks in Lemma~\ref{lem:normal_closed} are projected to the seven usual types of normal disks in a tetrahedron. 

\begin{proof}
We regard a cube in the decomposition as a polyhedron $P$ 
in which $L \cup P$ is a union of two edges. 
As a result, $P$ has 8 faces and 18 edges. 
Let $w(S)$ denote the number of intersectional points of $L$ and all the edges of the decomposition. 
We isotope $S$ in general position so that $w(S)$ is minimal. 

Suppose that the intersection of $S$ and a face contains a circle $C$. 
The circle $C$ bounds a disk $D_{0}$ in the face. 
Since $S$ is incompressible, the circle $C$ also bounds a disk $D_{1}$ in $S$. 
Since $T^{2} \times I \setminus L$ is irreducible, the sphere $D_{0} \cup D_{1}$ bounds a ball $B$. 
Then we can remove $C$ from the intersection by isotoping $S$ through $B$. 
Hence we may assume that the intersection of $S$ and each face does not contain a circle. 

For a polyhedron $P$ as above, consider a component $C$ of $S \cap \partial P$. 
Assume that there is an edge $e$ of $P$ such that $C \cap e$ contains two points. 
Fix an orientation of $C$. 
Since the circle $C$ bounds a disk in $\partial P$, there are two points of $C \cap e$ of opposite signs. 
Such two points innermost in $e$ bound an arc $\alpha$ in $e$. 
There is a disk $D_{0}$ in $\partial P$ such that $\partial D_{0} = \alpha \cup \beta$ and $\beta \subset C$. 
Then we can decrease $w(S)$ by isotoping $S$ through a neighborhood of $D_{0}$, 
which contradicts the minimality of $w(S)$. 
Hence $C$ intersects each edge at most once, 
and two edges in $C$ are contained in different faces of $P$. 

Moreover, $C$ bounds a disk $D$ in $S \cap P$. 
Otherwise, we can decrease $w(S)$ by isotoping $S$, since $S$ is incompressible. 
Then the disk $D$ is one of the seven types shown in Figure~\ref{fig:normal1.pdf}. 
\end{proof}

\begin{lem}
\label{lem:torus}
Let $L$ be a weave in $T^{2} \times I$, 
and let $T$ be an essential torus in $T^{2} \times I \setminus L$. 
After an ambient isotopy preserving the projection of $L$ to $T^{2}$, 
we have $T = \{ (x,y,z) \in T^{2} \times I \mid (x,z) \in C \}$ 
or $\{ (x,y,z) \in T^{2} \times I \mid (y,z) \in C \}$, 
where $C \subset S^{1} \times I$ is a circle. 
\end{lem}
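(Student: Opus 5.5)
Put $T$ in normal position with respect to the cube decomposition using Lemma~\ref{lem:normal_closed}. Then $T$ is a union of the seven types of normal disks of Figure~\ref{fig:normal1.pdf}, i.e.\ rectangles and hexagons. The aim is to show that the hexagons cannot occur, and that the rectangles all have one of two fixed orientations. For a rectangle, the orientation is recorded by which pair of opposite tetrahedron edges it separates, i.e.\ whether its boundary runs parallel to the $y$-direction or to the $x$-direction.

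The first step is a combinatorial Euler characteristic count. After collapsing, the cubes become tetrahedra and the normal disks become ordinary normal triangles and quadrilaterals. Under this correspondence the rectangles are the quadrilateral-type pieces and the hexagons are the triangle-type pieces; the corner vertices are the collapsed boundary squares and the arcs of $L$. I would assign each normal disk a combinatorial Gauss--Bonnet weight using angles at the collapsed ideal/link vertices, chosen so that every rectangle gets weight $0$ and every hexagon gets strictly negative weight. The weights must sum to $\chi(T)=0$. This forces there to be no hexagons, so $T$ is a union of rectangles.

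Next I would analyze how rectangles glue across the faces. Consider a rectangle in a cube that avoids the arcs of $L$ in a particular way, for instance one that separates the warp arc from the weft arc. Matching along shared faces forces the neighboring rectangle to have the same type. Propagating through the cubes, every component piece of $T$ in a row (or column) of cubes consists of rectangles of a single type. These rectangles are product pieces $\{(x,z)\in\text{arc}\}\times[\text{interval in }y]$, or the analogous pieces with $x$ and $y$ exchanged.

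Two cases then remain. If all rectangles are of the first type, $T$ is invariant in the $y$-direction up to isotopy within each cube. Straightening each piece by an isotopy preserving the projection of $L$ gives $T=\{(x,z)\in C\}\times S^1_y$ for a closed curve $C$ in the $(x,z)$-annulus $S^1\times I$ avoiding the points where warps sit. Embeddedness makes $C$ a circle. The symmetric case gives the other form. A mixture of the two types inside a single torus has to be excluded at a face where the types would meet; I expect this to be incompatible with the normal gluing, because the boundary arcs of the two types lie on different pairs of edges.

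The main obstacle will be the Euler characteristic bookkeeping. In particular, I need to choose weights so that rectangles are genuinely zero and hexagons negative, while correctly handling the vertices coming from $\partial T^2\times I$ and from $L$. I will also need to check the propagation argument at cubes where the crossing type changes between over and under. If the weighting fails, I would instead argue directly: a hexagon cuts off a corner around an arc of $L$, and following its neighbors produces a compressing or boundary-parallel annulus around a component of $L$. That would contradict the essentiality of $T$.
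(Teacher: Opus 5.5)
Your outline (normal position, an Euler characteristic count to eliminate hexagons, then straightening to a product) is the paper's. However, the dictionary you use for the count is wrong, and the step after the count fails as stated.

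Under the collapse to a tetrahedron, all four normal triangles are \emph{rectangles} in the cube: the squares parallel to the top and bottom, and the half-tubes around the warp arc and around the weft arc. Of the three quadrilaterals, only the one separating the top and the upper strand from the bottom and the lower strand is a rectangle; the other two are the hexagons. So your fallback (``a hexagon cuts off a corner around an arc of $L$'') actually describes a rectangle. The count also needs no angles at the collapsed vertices. The fact to use is that $T$ meets only the vertical edges, each surrounded by four cubes, so every vertex of the induced cell structure on $T$ has degree four. Then $v-e+f_{4}+f_{6}=0$, $2e=4v$ and $4f_{4}+6f_{6}=4v$ give $f_{6}=0$ at once.

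The genuine gap is the exclusion of a mixture of types. The right dichotomy is not ``$y$-product versus $x$-product''. The three horizontal rectangles are products in both directions; only the warp half-tube (resp.\ weft half-tube) is a product in the $y$-direction (resp.\ $x$-direction) alone. Your claim that gluing forces neighbours of the same type is false. Across the face containing the warp arc, a warp half-tube can glue to a square parallel to the top or bottom, or to the rectangle passing between the strands; this is exactly what happens in the torus around two parallel warps. Moreover, a warp half-tube and a weft half-tube are never adjacent across a face, so no local incompatibility at a face can exclude a torus containing both. You need the global statement the paper uses: if some rectangle surrounds a warp, none surrounds a weft, since otherwise $T$ would be disconnected.

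One way to make this precise goes as follows. Every rectangle has its two sides in faces $\{x=\mathrm{const}\}$ opposite each other, and likewise its two sides in faces $\{y=\mathrm{const}\}$. These alternate around each vertex, so $T$ is a square-tiled flat torus with rows and columns. Once hexagons are gone, crossing a side of a warp half-tube that lies in a face $\{y=\mathrm{const}\}$ forces another warp half-tube, because the only other normal disk containing that arc is a hexagon. Hence the whole column through a warp half-tube consists of warp half-tubes, and symmetrically the row through a weft half-tube consists of weft half-tubes. A row and a column are closed geodesics in perpendicular directions on a flat torus, so they meet, producing a rectangle that surrounds both a warp and a weft. Only after this does your straightening to $C \times S^{1}$ go through.
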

\begin{proof}
By Lemma~\ref{lem:normal_closed}, $T$ is isotopic in $T^{2} \times I \setminus L$ to a normal position with respect to the decomposition into cubes. 
For this decomposition of $T$ into normal disks, 
let $v$, $e$, $f_{4}$, and $f_{6}$ denote the numbers of vertices, edges, rectangles, and hexagons, respectively. 
The Euler characteristic of $T$ is computed as $v - e + f_{4} + f_{6} = 0$. 
Since each vertex has degree four, we have $4v = 2e$. 
By removing $e$, we have $f_{4} + f_{6} = v$. 
By counting the vertices, we have $4f_{4} + 6f_{6} = 4v$. 
Hence $f_{6} = 0$. 
In other words, there are no hexagons in $T$. 
Moreover, if there is a rectangle in $T$ that surrounds a warp, 
then there is not a rectangle in $T$ that surrounds a weft. 
Indeed, if there are two such rectangles, then $T$ would be disconnected. 

Suppose that there is not a rectangle in $T$ that surrounds a weft. 
Then after an ambient isotopy preserving the projection of $L$ to $T^{2}$, 
we have that $T$ is fibered by lines in the $y$-coordinate. 
In other words, $T = \{ (x,y,z) \in T^{2} \times I \mid (x,z) \in C \}$ for a circle $C \subset S^{1} \times I$. 
It is similar in the case that there is not a rectangle in $T$ that surrounds a warp. 
\end{proof}

\begin{proof}[Proof of Theorem~\ref{thm:hyp_weave}]
If $L$ is layered, then a layering torus is essential. 
If $L$ has parallel components, a torus that surrounds them is essential. 
In these cases, $L$ is not hyperbolic. 

Conversely, suppose that $L$ is not hyperbolic. 
Then there is an essential torus $T$ in $T^{2} \times I \setminus L$ by Theorem~\ref{thm:hyperbolization}. 
We isotope $T$ as in Lemma~\ref{lem:torus}. 
If $C$ is homotopically non-trivial in $S^{1} \times I$, then $T$ is a layering torus. 
Otherwise, $T$ bounds a solid torus in $T^{2} \times I$ that contains parallel components of $L$. 
In the latter case, we can isotope $L$ via weaves so that there are adjacent parallel components. 
\end{proof}

We describe the JSJ decomposition of a weave complement. 
The JSJ decomposition \cite{JS79, Johannson79} of a compact orientable irreducible 3-manifold with boundary consisting of incompressible tori is a unique minimal decomposition along essential tori (called the \emph{JSJ tori}) into hyperbolic or Seifert fibered pieces (called the \emph{JSJ pieces}). 

\begin{cor}
\label{cor:jsj}
Let $L$ be a weave in $T^{2} \times I$. 
Each JSJ piece of the complement of $L$ is one of the following types: 
\begin{itemize}
\item a hyperbolic weave complement, 
\item an $m \times 0$ or $0 \times n$-weave complement, or 
\item the complement of links in a solid torus with at least two components parallel to the core. 
\end{itemize}
\end{cor}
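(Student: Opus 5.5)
The plan is to induct on the number of JSJ tori, using Lemma~\ref{lem:torus} to put every essential torus into a standard form. Start from a maximal family of disjoint, pairwise non-parallel essential tori in $T^{2} \times I \setminus L$; this is finite by Haken--Kneser finiteness. The JSJ tori can be taken as a subfamily of it. Apply Lemma~\ref{lem:torus} to one torus $T$ at a time. Each $T$ is either a layering torus (its circle $C$ is essential in $S^{1} \times I$, so $T$ is parallel to $T^{2} \times \{z\}$), or $T$ bounds a solid torus $V$ containing a run of consecutive mutually parallel components, all warps or all wefts.

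\emph{Cutting along a layering torus.} Cutting along a layering torus splits $L$ into two weaves $L_{-}$ and $L_{+}$ in two copies of $T^{2} \times I$, because the isotopy in Lemma~\ref{lem:torus} preserves the projection of $L$. So each piece is again a weave complement.

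\emph{Cutting along a solid-torus torus.} Cutting along a torus $T = \partial V$ of the second type produces two pieces. One is $V$ minus at least two components parallel to its core. The other is the complement of a weave obtained by replacing the whole parallel run with a single component. That component is the core of $V$, and it is again a warp or weft, with the common crossing function of the run.

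\emph{Recursion and classification.} Cut along such tori recursively. Note that tori inside $V$ are handled within the solid-torus piece, and its JSJ pieces are of the same kind. Each remaining piece is then one of three things.
\begin{enumerate}
\item A weave complement with no essential torus. If that weave has both warps and wefts, it is hyperbolic by Theorem~\ref{thm:hyp_weave}. Otherwise it is an $m \times 0$ or $0 \times n$ weave complement, which is Seifert fibered: a punctured annulus times $S^{1}$.
\item A Seifert fibered complement in a solid torus of parallel cores, i.e.\ a punctured disk times $S^{1}$.
\item A degenerate case. For example, if a layer contains warps and exactly one weft, that layer is itself layered and is cut further. The recursion ends only at layers of the types in item~1 or item~2, or at hyperbolic pieces.
\end{enumerate}

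\emph{Main obstacle: minimality.} The JSJ decomposition is the minimal one, so some of the tori above may be redundant. An example is two adjacent layering tori with a product region between them, or a layering torus between two Seifert pieces whose fibrations match. The remaining step is to check that discarding redundant tori amalgamates pieces only into pieces of the same listed types. For instance, consecutive $m \times 0$ layers glued along a layering torus give one $(m+m') \times 0$ weave complement. Also, a solid-torus piece glued to an $m \times 0$ layer whose fibers match gives a larger Seifert piece of the second type. I expect this amalgamation bookkeeping to be the hardest part, especially checking fiber directions: a warp-layer and a weft-layer have different fiber slopes, so the torus between them is genuinely JSJ. I would handle it by noting that the only Seifert pieces arising have fibers parallel to the warp direction or the weft direction. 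Matching fibers therefore occur exactly when the neighbouring pieces are of the same direction, and their union is again of a listed type.
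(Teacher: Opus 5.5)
Your argument is correct in outline, but it runs bottom-up where the paper runs top-down.

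\textbf{The paper's route.} The paper works directly with the JSJ tori. By Lemma~\ref{lem:torus}, each one is either a layering torus or bounds a solid torus of parallel cores. The pieces inside those solid tori are of the third type. Every other piece is a weave complement, with each such solid torus replaced by its core. A Seifert fibered one of these is identified as an $m \times 0$ or $0 \times n$ weave complement by citing \cite[Proposition 3.9]{KMMY25}, and the rest are hyperbolic. Since no non-JSJ tori are ever introduced, there is nothing to amalgamate.

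\textbf{Your route and what it buys.} You cut along a maximal family, so every terminal piece is atoroidal. Each is therefore either a pair of pants times $S^{1}$ (a $1 \times 0$, $0 \times 1$, or two-core solid-torus piece) or hyperbolic by Theorem~\ref{thm:hyperbolization}. This avoids the external citation. Your recursion also makes explicit how several tori are put in standard form at once, which the paper's three-line proof leaves implicit.

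\textbf{What the reassembly step needs.} The cost is the reassembly, and there you should name the standard facts it actually uses:
\begin{enumerate}
\item A pair of pants times $S^{1}$ has a unique Seifert fibration up to isotopy. So a warp/weft mismatch in fibre direction across a torus genuinely forces it to be a JSJ torus.
\item A hyperbolic terminal piece is a JSJ piece by itself. Any family torus inside a hyperbolic JSJ piece would be boundary-parallel there, hence parallel to a JSJ torus or a boundary torus.
\item A Seifert fibered JSJ piece is therefore a union of same-direction terminal pieces glued fibrewise.
\item Every torus in $T^{2} \times I$ separates, since $T^{2} \times I$ embeds in $S^{3}$. So that union is $(\text{planar surface}) \times S^{1}$.
\end{enumerate}

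\textbf{Simplification.} The bookkeeping you flag as hardest then collapses. A planar surface with $k \geq 3$ boundary circles, times $S^{1}$, is simultaneously a $(k-2) \times 0$ weave complement and the complement of $k-1 \geq 2$ parallel cores in a solid torus. So no case-by-case matching of layers and solid tori is needed.

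Your item~3 is superfluous: with a maximal family, every terminal piece is already atoroidal, so items~1 and~2 cover everything.
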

\begin{proof}
If a JSJ torus bounds a solid torus in $T^{2} \times I$, this solid torus contains components parallel to the core, whose complement is Seifert fibered. 
If a weave in $T^{2} \times I$ has a Seifert fibered complement, 
then it is an $m \times 0$ or $0 \times n$-weave by \cite[Proposition 3.9]{KMMY25}. 
The remaining JSJ pieces are hyperbolic weave complements. 
\end{proof}

We also consider how generic hyperbolic weaves are. 
For $m, n \geq 1$, let $N_{\mathrm{hyp}}(m, n)$ denote the number of crossing functions 
$c \colon \{ 1, \dots, m \} \times \{ 1, \dots, n \} \to \{ 0,1 \}$ representing hyperbolic weaves. 
There are $2^{mn}$ crossing functions for fixed $m$ and $n$. 

\begin{prop}
\label{prop:proportion}
For fixed $m \geq 1$, we have 
\[
\lim_{n \to \infty} \frac{N_{\mathrm{hyp}}(m, n)}{2^{mn}} = 0 
\quad \text{and} \quad 
\lim_{n \to \infty} \frac{N_{\mathrm{hyp}}(n, n)}{2^{n^{2}}} = 1. 
\]
\end{prop}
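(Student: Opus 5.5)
For the first limit, fix $m$ and let $n \to \infty$. The plan is to show that with probability tending to $1$ there are two adjacent wefts with equal crossing functions, which rules out hyperbolicity by Theorem~\ref{thm:hyp_weave}. Each weft has a crossing function $c(\cdot, j) \colon \{1,\dots,m\} \to \{0,1\}$, which is one of only $2^{m}$ possible values. Take a uniformly random crossing function $c$. Then the columns $j = 1, \dots, n$ are independent and uniform. For each of the $\lfloor n/2 \rfloor$ disjoint adjacent pairs $(2k-1, 2k)$, the event that the two wefts are parallel has probability $2^{-m}$, and these events are independent. So the probability that no adjacent pair is parallel is at most $(1 - 2^{-m})^{\lfloor n/2 \rfloor} \to 0$. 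Since a pair of parallel adjacent components already prevents hyperbolicity, this gives $N_{\mathrm{hyp}}(m,n)/2^{mn} \to 0$. (One could also use the pigeonhole fact that some crossing function repeats, but repeated values need not sit in adjacent positions, so the independence bound is cleaner.)

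For the second limit, take $m = n$. I would use Corollary~\ref{cor:no_ac}: if no pair of adjacent components (adjacent warps, and adjacent wefts, cyclically) is comparable, then $L$ is hyperbolic. So it suffices to show that the probability that some adjacent pair is comparable tends to $0$. Two fixed columns $c(i,\cdot)$ and $c(i+1,\cdot)$ are independent uniform vectors in $\{0,1\}^{n}$. They are comparable exactly when, at each coordinate, they avoid one of the two discordant patterns. Hence
\[
P(\text{comparable}) \le 2 \cdot (3/4)^{n}.
\]
There are $n$ adjacent warp pairs and $n$ adjacent weft pairs. A union bound gives total probability at most $4n (3/4)^{n} \to 0$. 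Therefore $N_{\mathrm{hyp}}(n,n)/2^{n^{2}} \ge 1 - 4n(3/4)^{n} \to 1$.

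Neither step is really an obstacle. The point needing care is the first limit: we must make sure that parallelism of an \emph{adjacent} pair in the original diagram suffices, and it does, since Theorem~\ref{thm:hyp_weave} forbids parallel pairs even before any interchange. We must also handle small cases such as $m = 1$ or $n = 1$. For $n = 1$ the weave is layered, and this has no effect on the limit.
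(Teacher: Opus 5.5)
Your proof is correct. For the second limit you follow the paper's argument: Corollary~\ref{cor:no_ac} plus a union bound over the $2n$ adjacent pairs, using the $3^{n}$ count of pairs with $c(i,\cdot) \le c(i+1,\cdot)$. This gives exactly $1 - 4n(3/4)^{n}$.

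For the first limit the paper uses a different obstruction. A weft whose crossing function is constant lies entirely below (or above) every warp. It can therefore be pushed into a collar of $T^{2} \times \{0\}$ (or $T^{2} \times \{1\}$), where a level torus separates it from the warps, so the weave is layered. Hence each weft of a hyperbolic weave takes one of only $2^{m}-2$ values. This gives $N_{\mathrm{hyp}}(m,n) \le (2^{m}-2)^{n}$, so the ratio is at most $(1-2^{1-m})^{n}$.

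The paper's count goes weft by weft and needs no pairing. It decays faster and gives $N_{\mathrm{hyp}}(1,n)=0$ outright. However, it relies on the layering obstruction, whose justification the paper leaves implicit. Your route instead uses the parallel-pair obstruction, which the paper justifies explicitly before Theorem~\ref{thm:hyp_weave}. Because that event involves two wefts, you need the disjoint-pairs device to get independence. This yields the weaker but still sufficient bound $(1-2^{-m})^{\lfloor n/2 \rfloor}$.
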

\begin{proof}
If an $m \times n$-weave is hyperbolic, 
then the crossing function of each weft is not constant. 
Hence 
\[
N_{\mathrm{hyp}}(m, n) \leq (2^{m}-2)^{n}, 
\]
and so 
\[
\lim_{n \to \infty} \frac{N_{\mathrm{hyp}}(m, n)}{2^{mn}} 
\leq \lim_{n \to \infty} \left( 1- 2^{1-m} \right)^{n} = 0. 
\]

Corollary~\ref{cor:no_ac} implies that $N_{\mathrm{hyp}}(m, n) \geq N_{\mathrm{nc}}(m, n)$ for $m, n \geq 2$, 
where $N_{\mathrm{nc}}(m, n)$ is the number of crossing functions 
$c \colon \{ 1, \dots, m \} \times \{ 1, \dots, n \} \to \{ 0,1 \}$ representing weaves in which no pairs of adjacent components are comparable. 
For each $1 \leq i \leq m$, there are $3^{n}$ pairs of crossing functions $c(i, \cdot), c(i+1, \cdot) \colon \{ 1, \dots, n \} \to \{ 0,1 \}$ of the $i$-th and $i+1$-th warps such that $c(i, \cdot) \leq c(i+1, \cdot)$, 
which correspond to the functions $\{ 1, \dots, n \} \to \{ (0,0), (0,1), (1,1) \}$. 
Hence 
\[
N_{\mathrm{nc}}(m, n) \geq 2^{mn} - 2m \cdot 2^{(m-2)n} 3^{n} - 2n \cdot 2^{m(n-2)}3^{m}, 
\]
and so 
\[
\lim_{n \to \infty} \frac{N_{\mathrm{hyp}}(n, n)}{2^{n^{2}}} 
\geq \lim_{n \to \infty} \left( 1 - 4n \left( \frac{3}{4} \right)^{n} \right) = 1. 
\]
\end{proof}

Proposition~\ref{prop:proportion} can be compared with the results \cite{IM17, Ito15, Ma14} that generic links via random braid closure or random bridge position are hyperbolic. 
In contrast, Malyutin \cite{Malyutin19} showed that the proportion of hyperbolic links among the prime non-split links of at most $n$ crossings does not converge to 1 as $n \to \infty$ (see also \cite{BM19, Malyutin20}).

\section{Conway spheres for weaves}
\label{section:conway}

A \emph{Conway sphere} for a link $L$ in a 3-manifold $X$ is an embedded sphere in $X$ which intersects $L$ transversely at four points. 
We show that every Conway sphere for a weave is compressible in the complement. 

\begin{thm}
\label{thm:normal_conway}
Let $L$ be a weave in $T^{2} \times I$. 
Then there does not exist an essential Conway sphere for $L$ in $T^{2} \times I$. 
\end{thm}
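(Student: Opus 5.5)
We will mimic the normal surface argument of Lemma~\ref{lem:normal_closed} and Lemma~\ref{lem:torus}. The surface is now the four-punctured sphere $S = \Sigma \setminus N(L)$, where $\Sigma$ is a Conway sphere. Suppose, for contradiction, that $S$ is essential in $T^{2} \times I \setminus N(L)$, i.e.\ incompressible, boundary-incompressible and not boundary-parallel. The first step is to put $S$ into normal position with respect to the decomposition into $4mn$ cubes.

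\textbf{Step 1: normal form.} In the collapsed picture of Figure~\ref{fig:weave_decomp.pdf}, each cube is a tetrahedron whose four vertices correspond to the two boundary squares and the two arcs of $L$. A puncture of $S$ then becomes a point of $\Sigma \cap L$, i.e.\ a point lying on a vertex of this ideal-type triangulation. We repeat the minimality argument for $w(S)$ from Lemma~\ref{lem:normal_closed}, now also allowing $S$ to meet the faces containing $L$ near the punctures:
\begin{itemize}
\item circles of intersection with faces are removed using incompressibility and irreducibility;
\item arcs of intersection with faces that cut off a disk meeting $\partial N(L)$ in a single arc are removed using boundary-incompressibility;
\item double intersections with edges are removed exactly as before.
\end{itemize}
The result is that $S$ is a union of normal pieces. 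Besides the seven closed types of Figure~\ref{fig:normal1.pdf}, there are now pieces meeting $\partial N(L)$: these are truncated normal disks, namely triangles or quadrilaterals with a corner cut off by a meridional arc of $L$. I expect to have to list these extra types, and I expect the list to be short, since $L$ meets each cube in two arcs lying in adjacent faces.

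\textbf{Step 2: Euler characteristic count.} We have $\chi(S) = -2$, and $S$ has four boundary circles, each a meridian of $L$. We decompose $S$ into the normal pieces and count vertices and edges as in Lemma~\ref{lem:torus}, treating the boundary arcs on $\partial N(L)$ separately. Interior vertices have degree four, while vertices on $\partial S$ have degree three. For the closed torus we obtained $\sum (4 - \text{corners}) = 0$, which forced $f_{6} = 0$. For $S$ the analogous identity should read
\[
\sum_{\text{pieces}} \bigl(\text{contribution}\bigr) = 2\pi\text{-normalized } \chi(S) = -2,
\]
which is a combinatorial Gauss--Bonnet formula. In it, rectangles contribute $0$, hexagons contribute negatively, and truncated pieces near $L$ contribute according to their number of corners. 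The aim is to show that the total negative curvature available is too small or has the wrong form. Concretely, the four meridian punctures must be accounted for by hexagons adjacent to $L$. We then analyse how these hexagons glue, following rectangles along the $x$- or $y$-direction as in Lemma~\ref{lem:torus}.

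\textbf{Step 3: conclusion.} With $f_{6}$ bounded by the count, the rectangle regions of $S$ are fibred by lines in the $x$- or $y$-direction, just as in Lemma~\ref{lem:torus}. The finitely many hexagons and truncated pieces sit near the four punctures. From this structure we aim to exhibit one of two things. Either $S$ cuts off a ball meeting $L$ in two trivially embedded arcs, in which case $S$ is boundary-parallel or compressible, since $L$ has no local knots: every component of $L$ is a geodesic in the diagram, so any two-strand tangle cut off is rational and trivial. Or $S$ contains a compressing disk, obtained from a rectangle annulus that meets $S$ in an essential curve bounding a disk surrounding two punctures, i.e.\ a region crossing a warp or weft once. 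Either way we contradict essentiality.

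\textbf{Main obstacle.} I expect the main difficulty to be Step 2 and its combinatorial aftermath. One has to enumerate the truncated normal pieces near $L$, which Lemma~\ref{lem:normal_closed} did not need. One must also check that the curvature count really leaves no configuration other than the trivial ones, since for $\chi = -2$ the count alone gives a bound rather than a contradiction. My first attempt would be to show that each puncture forces specific pieces adjacent to it, and that those four pieces already exhaust the $-2$. Everything else is then a rectangle, and the fibred-rectangle argument of Lemma~\ref{lem:torus} finishes the proof.
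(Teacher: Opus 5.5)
Your route is the paper's: normal position with respect to the cubes, then an Euler characteristic count. But the proposal stops where the proof happens, and the outcome you predict is not the one that occurs.

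\textbf{The count is exact, not a bound.} Let $f_{k,l}$ be the number of pieces with $k$ corners at punctures and $l$ other corners. Put right angles at every corner, including the two ends of each arc on $\partial N(L)$. Then your Gauss--Bonnet identity reads $\sum (4-l-2k) f_{k,l} = -8$. Each puncture touches exactly two pieces, so $\sum k f_{k,l} = 8$, and hence $\sum (4-l) f_{k,l} = 8$.

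Suppose the only piece with $l<4$ is the bigon, $(k,l)=(2,2)$. Its corners are, in cyclic order, on the warp, on the segment of the crossing vertical edge above both strands, on the weft, and on the segment below both. Then $2f_{2,2} = 8 + 2\sum_k f_{k,6} \ge 8$, while $2f_{2,2} \le \sum k f_{k,l} = 8$. So there are exactly four bigons, no hexagons, and no other piece meets $L$. Every edge of a bigon ends on $L$, so bigons border only bigons. Hence $S$ is the small sphere around a single crossing, and a disk separating the over-strand from the under-strand compresses it.

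Consequently your expectation that the punctures are ``accounted for by hexagons'' is wrong: there are none. Your Step~3 is also unnecessary: the fibred rectangle regions, the compressing disks from rectangle annuli, and the claim that any two-strand tangle cut off is rational and trivial because components are geodesics. For that last claim you give no argument.

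\textbf{The normalization that makes the count rigid is missing from your Step~1.} You must also forbid a piece in which the two corners adjacent to a puncture lie on two edges of $P$ sharing a vertex. An example is the vertical edges just above and below an endpoint of an arc of $L$. When this happens, push the puncture along $L$ across the disk in $\partial P$ bounded by those two arcs of $C$ and two edge segments; this lowers $w(S)$. This move is an isotopy of the punctured surface along $\partial N(L)$, not a boundary compression, and no edge is met twice. So none of your three bullets supplies it.

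Without it you keep, for example, the triangle around such an endpoint with one puncture and two ordinary corners. It contributes $4-2-2=0$ to your sum. Numerically, eight such triangles plus four hexagons satisfy both identities above with no bigons at all, so the count forces nothing. With the condition, the possible pieces are the $7$ closed types and $15$ punctured types, and the bigon is the only one with $l<4$. That is exactly what the argument above needs.
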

\begin{proof}
Assume that $S$ is an essential Conway sphere. 
In the same manner as the proof of Lemma~\ref{lem:normal_closed}, 
we consider normal position of $S$ with respect to the cubical decomposition. 
Let $w(S)$ denote the number of intersectional points of $L$ and all the edges of the decomposition. 
We isotope $S$ in general position so that $w(S)$ is minimal. 
We may assume that the intersection of $S$ and each face does not contain a circle. 
Let $C$ be a component of the intersection of $S$ and the boundary of a polyhedron $P$. 
Then $C$ intersects each edge at most once, and $C$ bounds a disk $D$ in $S \cap P$. 
The difference from the proof of Lemma~\ref{lem:normal_closed} is that $C$ may intersect $L$. 
Moreover, two vertices of $C$ adjacent to $C \cap L$ cannot be contained in two adjacent edges of $P$. 
Otherwise, we can decrease $w(S)$ by isotoping $S$ in a neighborhood of a disk in $\partial P$ whose boundary is a union of four segments in $C$ and the adjacent edges. 
Then the disk $D$ is one of the 7 types in Figure~\ref{fig:normal1.pdf} and the 15 types in Figure~\ref{fig:normal2.pdf}. 
The type in the upper left of Figure~\ref{fig:normal2.pdf} is called a \emph{bigon}. 
The other 14 types can be regarded as modifications of the former 7 types intersecting $L$. 

\fig[width=12cm]{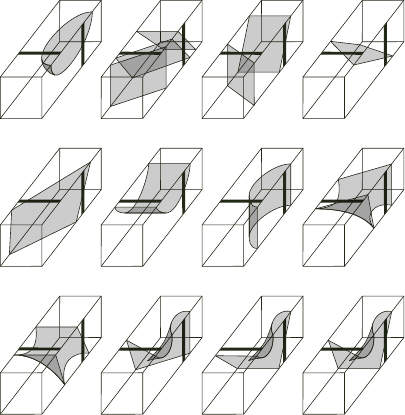}{Normal disks intersecting $L$}

For the decomposition of $S$ into normal disks, 
let $v_{4}$, $e$, and $f_{k,l}$ denote the numbers of vertices of degree four, edges, and faces with $k$ vertices contained in $L$ and the other $l$ vertices. 
Since $S$ intersects $L$ at four points, there are precisely four vertices of degree two. 
The Euler characteristic of $S$ is computed as $4 + v_{4} - e + \sum_{k,l} f_{k,l} = 2$. 
By considering the degrees of the vertices, we have $8 + 4v_{4} = 2e$. 
By removing $e$, we have $\sum_{k,l} f_{k,l} = v_{4} + 2$. 
By counting the vertices, we have $\sum_{k,l} k f_{k,l} = 8$ and $\sum_{k,l} l f_{k,l} = 4v_{4}$. 
By removing $v_{4}$, we have $\sum_{k,l} (4-l) f_{k,l} = 8$. 
Now $k = 0,1,2$ and $l =4, 6$ except for $(k,l) = (2,2)$. 
Hence $2f_{2,2} = \sum_{0 \leq k \leq 2} 2 f_{k,6} + 8 \geq 8$. 
By combining $\sum_{k,l} k f_{k,l} = 8$ and $f_{2,2} \geq 4$, 
we obtain $f_{2,2} = 4$ and $f_{k,l} = 0$ for $k \geq 1$. 
Hence the faces adjacent to a bigon are also bigons, 
and so $S$ is a union of four bigons. 
Then $S$ is compressible, which contradicts the assumption. 
\end{proof}

For $0 < \theta \leq \pi$, 
a link $L$ in a 3-manifold $X$ is called \emph{$\theta$-hyperbolic} 
if the interior of $X$ admits a hyperbolic cone-manifold structure of finite volume 
in which $L$ is the singular locus with a cone angle $\theta$. 
A cone-manifold with a cone angle $2\pi/n$ for $n \geq 2$ can be regarded as an orbifold. 
If $L$ is $\theta$-hyperbolic, then $L$ is hyperbolic. 
More strongly, they are joined with a continuous deformation of hyperbolic cone-manifold structures \cite{Kojima98}. 

As a special case, $\pi$-hyperbolicity of a knot in $S^{3}$ and decomposition of a knot in $S^{3}$ along essential Conway spheres were investigated in \cite{BS79}. 
More generally, a $\pi$-hyperbolic link in a 3-manifold can be characterized by Thurston's orbifold theorem, which was proved in \cite{BLP05, CHK00}. 
In the proof, hyperbolic cone-manifold structures are deformed from a hyperbolic structure by increasing cone angles. 
Deformation continues until an essential Euclidean cone-subsurface appears or the structures collapse. 
In our setting, the $\pi$-hyperbolic links are characterized as follows. 

\begin{thm}
\label{thm:pi-hyperbolization}
Let $L$ be a hyperbolic link in a compact orientable irreducible 3-manifold $X$ with boundary consisting of non-empty incompressible tori. 
Suppose that there does not exist an essential Conway sphere for $L$ in $X$. 
Then $L$ is $\pi$-hyperbolic. 
\end{thm}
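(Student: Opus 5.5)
The plan is to deduce the statement from Thurston's orbifold theorem, in the form proved in \cite{BLP05, CHK00}. Let $\mathcal{O}$ be the orbifold with underlying space $X$ and singular locus $L$ of cone angle $\pi$. The orbifold theorem says that a compact orientable irreducible 3-orbifold with toral boundary is hyperbolic if it satisfies three conditions: it is \emph{good}, it is \emph{topologically atoroidal}, and it contains no essential \emph{Euclidean} 2-suborbifold; it must also not be Seifert fibered or small in the excluded sense. So the proof reduces to checking each of these conditions for $\mathcal{O}$ and then relabeling "hyperbolic orbifold" as "$\pi$-hyperbolic".

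The first step is irreducibility of $\mathcal{O}$, which means every spherical 2-suborbifold bounds a discal 3-suborbifold. There are two cases.
\begin{itemize}
\item A sphere disjoint from $L$ bounds a ball in $X \setminus L$, because the complement is irreducible. This holds since $L$ is hyperbolic, so $X\setminus L$ has a hyperbolic interior.
\item A sphere meeting $L$ in two points is $S^2(\pi,\pi)$. Its complement in the sphere is an annulus. Hyperbolicity of $X\setminus L$ rules out essential annuli, so the annulus is compressible or boundary-parallel. Either way, the sphere bounds a ball meeting $L$ in an unknotted arc.
\end{itemize}
Spheres meeting $L$ in one or three points cannot occur, by parity of intersection. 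Goodness then follows, because $\mathcal{O}$ is finitely covered by a manifold once it is irreducible, Haken, and has nonempty boundary.

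The second step is the Euclidean 2-suborbifolds with cone angle $\pi$. These are:
\begin{itemize}
\item tori disjoint from $L$;
\item spheres $S^2(\pi,\pi,\pi,\pi)$, i.e.\ Conway spheres;
\item disks $D^2(\pi,\pi)$ with reflector or boundary conditions, which do not arise here because $\partial X$ consists of tori.
\end{itemize}
Essential tori in $\mathcal{O}$ are essential tori in $X \setminus L$, and hyperbolicity of $L$ forbids these. An essential Euclidean sphere is exactly an essential Conway sphere, which is excluded by hypothesis. We also need $\mathcal{O}$ not to be Seifert fibered. A Seifert fibered $\mathcal{O}$ would make $X\setminus L$ Seifert fibered (the fibration restricts away from the singular fibers), contradicting hyperbolicity.

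The main obstacle is matching the notion of "essential" in the orbifold setting with the one for surfaces in $X\setminus L$. In particular, a compressing disk for a Conway sphere in the orbifold sense may be a disk meeting $L$ once, that is a $D^2(\pi)$. My first step here is to show that such a compression, together with irreducibility of $\mathcal{O}$, produces an essential annulus or a two-punctured sphere in $X\setminus L$. Hyperbolicity excludes both, so orbifold-incompressibility reduces to incompressibility in the complement. Once the hypotheses are verified, the orbifold theorem gives a hyperbolic structure on $\inter X$ with cone angle $\pi$ along $L$. Finiteness of the volume follows because the tori of $\partial X$ become cusps.
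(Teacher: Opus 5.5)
\textbf{There is a genuine gap in your Seifert fibered case.} Your overall route is the paper's: apply the orbifold theorem, rule out essential spherical and Euclidean 2-suborbifolds, then exclude the Seifert fibered case. The failure is in that last step. You claim that if $\mathcal{O}$ is Seifert fibered then $X \setminus L$ is Seifert fibered, because ``the fibration restricts away from the singular fibers''. This presupposes that $L$ is a union of fibers, which need not hold.

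The base 2-orbifold of a Seifert fibered orbifold may have reflector curves with corner reflectors.
\begin{itemize}
\item Over a reflector point the local model is $(S^{1} \times D^{2})/(\mathbb{Z}/2)$ with involution $(z,w) \mapsto (\bar{z},\bar{w})$, where $z$ is the fiber coordinate.
\item Its fixed set is two arcs \emph{transverse} to the circle fibers.
\item The images of the fibers over reflector points are arcs with endpoints on $L$.
\item Over a corner reflector, the singular locus forms a rational tangle.
\end{itemize}
So $L$ can contain Montesinos-type links winding around solid tori (the preimages of collars of reflector circles of the base). In that case nothing descends to a Seifert fibration of $X \setminus L$. This is exactly the case the paper's appeal to Dunbar's classification handles: $X$ is a Seifert fibered manifold with solid tori glued in, and $L$ is a union of fibers together with Montesinos links in the glued solid tori.

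\textbf{How to close the gap.} You must treat the reflector case separately, as the paper does with its dichotomy ``$L$ is a union of fibers, or there is an essential torus in $X \setminus L$''. Let $V$ be such a solid torus.
\begin{itemize}
\item On the $V$ side, $\partial V \subset X \setminus L$ is incompressible, since the fibered orbifold over a collar of a reflector circle has $\pi_{1}$-injective boundary.
\item Also on the $V$ side, $\partial V$ is not parallel to a component of $L$. Indeed, a meridian disc of $V$ (the preimage of an arc leaving the reflector circle) meets $L$ in two points, so $L \cap V$ is not a core.
\item On the other side lies a Seifert fibered piece whose boundary components include $\partial V$ and the nonempty $\partial X$. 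It is therefore not a solid torus, so $\partial V$ is incompressible there too.
\item $\partial V$ is not parallel to a component of $\partial X$, since otherwise $X \cong V$ would have compressible boundary.
\end{itemize}
Hence $\partial V$ is an essential torus in $X \setminus L$, contradicting hyperbolicity. Only when there are no reflectors does your ``union of fibers'' argument apply.

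\textbf{A minor remark on Conway spheres.} You only need that an orbifold-essential pillowcase yields an essential Conway sphere in $X \setminus L$. This is essentially immediate, because ordinary compressing discs are also orbifold compressing discs. The $D^{2}(\pi)$-compression analysis you call the main obstacle proves the converse direction, which is not needed.
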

\begin{proof}
Consider the orbifold $\mathcal{O}$ of the underlying space $X$ with a cone angle $\pi$ at the singular locus $L$. 
Thurston's orbifold theorem states that $\mathcal{O}$ is decomposed along (possibly empty) essential spherical and Euclidean 2-suborbifolds into pieces each of which is a hyperbolic, Seifert fibered, Euclidean, spherical, or Sol orbifold. 
Since the last two types are closed, we exclude them. 
A non-closed Euclidean 3-orbifold is Seifert fibered. 
The closed Euclidean 3-orbifolds that are not Seifert fibered were classified by Dunbar \cite{Dunbar88}. 

Since $X$ is irreducible, a sphere embedded in $X$ intersects $L$ transversely at an even number of points. 
In particular, there is not an embedded sphere with three intersectional points of $L$, called a turnover. 
Moreover, since $L$ is hyperbolic, there is not an essential sphere with two intersectional points of $L$. 
Hence there is not an essential spherical 2-suborbifold of $\mathcal{O}$. 
A possible essential Euclidean 2-suborbifold of $\mathcal{O}$ is a sphere with four cone points of cone angles $\pi$, called a pillowcase, which is a Conway sphere. 

Since there does not exist an essential Conway sphere for $L$, 
then $\mathcal{O}$ is hyperbolic or Seifert fibered. 
Assume that $\mathcal{O}$ is Seifert fibered. 
Then $\mathcal{O}$ is geometric (see \cite[Theorem 2.50]{CHK00}). 
By \cite[Theorem 1]{Dunbar88}, 
the 3-manifold $X$ is obtained by gluing (possibly empty) solid tori to a Seifert fibered manifold, 
and the link $L$ is a union of fibers and Montesinos links in the attached solid tori. 
Since $X$ has non-empty boundary, 
we have $L$ is a union of fibers of a Seifert fibered 3-manifold $X$, 
or there is an essential torus in $X \setminus L$. 
This contradicts the assumption that $L$ is hyperbolic. 
Hence $\mathcal{O}$ is hyperbolic. 
\end{proof}

\begin{cor}
\label{cor:pi-hyp_weave}
If a weave $L$ in $T^{2} \times I$ is hyperbolic, then $L$ is $\pi$-hyperbolic. 
\end{cor}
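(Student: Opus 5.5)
The plan is to apply Theorem~\ref{thm:pi-hyperbolization} with $X = T^{2} \times I$, taking the missing Conway sphere from Theorem~\ref{thm:normal_conway}. So the work is to check the hypotheses of Theorem~\ref{thm:pi-hyperbolization} for a hyperbolic weave $L$.

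First, $X = T^{2} \times I$ is compact and orientable. It is irreducible, since its universal cover $\bbR^{2} \times I$ is irreducible; alternatively, cite the standard fact that $T^{2} \times I$ is irreducible. Its boundary is two tori $T^{2} \times \{0\}$ and $T^{2} \times \{1\}$, so it is non-empty. Each boundary torus is incompressible because the inclusion induces an isomorphism $\pi_{1}(T^{2}) \to \pi_{1}(T^{2} \times I)$, which is injective.

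Second, $L$ is hyperbolic by assumption. Theorem~\ref{thm:normal_conway} states that no essential Conway sphere for $L$ exists in $T^{2} \times I$. Theorem~\ref{thm:pi-hyperbolization} then gives directly that $L$ is $\pi$-hyperbolic.

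No step is a real obstacle, since the substantive work sits in Theorems~\ref{thm:normal_conway} and~\ref{thm:pi-hyperbolization}. The one thing to confirm is that "essential" is used the same way in both theorems: incompressible in $X \setminus L$ and not boundary-parallel, i.e., not bounding a ball meeting $L$ in two trivial arcs. Theorem~\ref{thm:normal_conway} rules out all incompressible Conway spheres, so in particular it rules out the essential ones required in Theorem~\ref{thm:pi-hyperbolization}. Hence the corollary follows.
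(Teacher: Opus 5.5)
Your proposal is correct and matches the paper. The paper states the corollary without separate proof, as the immediate combination of Theorem~\ref{thm:normal_conway} and Theorem~\ref{thm:pi-hyperbolization} applied to $X = T^{2} \times I$; your check of the hypotheses (compactness, irreducibility, non-empty incompressible toral boundary) just makes explicit what the paper leaves implicit.
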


Note that hyperbolic cone-manifold structures on the alternating $2 \times 2$-weave were explicitly constructed in \cite{Yoshida22}.

\section{Further questions}
\label{section:question}

\subsection{Symmetries of weaves}

The preimage of a link in $T^{2} \times I$ by the universal covering map $\bbR^{2} \times I \to T^{2} \times I$ is called a \emph{doubly periodic tangle}. 
Actually, textile structures such as weaves should be regarded as doubly periodic tangles, rather than links in $T^{2} \times I$. 
A \emph{layer group} is a discrete group that acts $\bbR^{2} \times I$ cocompactly 
and corresponds to an orbifold that is a quotient of $T^{2} \times I$. 
The symmetry of a doubly periodic tangle is represented by one of the 80 layer groups, as explained in \cite{MDD25}. 
According to De Las Pe\~{n}as, Tomenes, and Liza \cite{PTL24}, 
there are precisely 50 layer groups that represent the symmetries of weaves. 
However, they classified the symmetries for fixed weaving diagrams 
and did not consider isotopy classes of weaves. 
Moreover, given examples of weaves with low symmetries often have adjacent parallel components. 
Nonetheless, the Mostow rigidity theorem \cite{Mostow73, Prasad73} implies that  
the symmetry of a hyperbolic weave $L$ in $T^{2} \times I$ up to isotopy is represented by 
the finite group $\Isom(T^{2} \times I \setminus L)$ consisting of the isometries on the hyperbolic 3-manifold $T^{2} \times I \setminus L$. 
For the doubly periodic tangle $\widetilde{L}$ in $\bbR^{2} \times I$ that is the preimage of $L$, 
the group $\Isom(\bbR^{2} \times I \setminus \widetilde{L})$ is a layer group. 

\begin{ques}
Can each of the 50 layer groups given in \cite{PTL24} be obtained from the symmetry of a hyperbolic weave up to isotopy? 
\end{ques}

\subsection{Hyperbolic volumes of weaves}

Let us consider the volume $\vol(L)$ of a $m \times n$-weave $L$, 
which is the sum of hyperbolic volumes of hyperbolic JSJ pieces of the complement of $L$. 
We have the volume function 
\[
\vol \colon \{ c \colon \{ 1, \dots, m \} \times \{ 1, \dots, n \} \to \{ 0,1 \} \} \to \bbR_{\geq 0} 
\] 
that assigns the volume of the corresponding weave to a crossing function.

As shown in \cite{CKP16}, the complement of an $m \times n$-weave $L$ is decomposed into $mn$ ideal octahedra. 
It is also obtained from the decomposition in Section~\ref{section:torus}: 
An ideal octahedron is obtained by gluing four ideal tetrahedra. 
Hence $\vol(L) \leq mn V_{\mathrm{oct}}$, where $V_{\mathrm{oct}} \approx 3.6638$ is the volume of a regular ideal octahedron. 
For even $m$ and $n$, the maximum is attained by the alternating weave.

For a hyperbolic weave $L$, let $\vol_{\pi}(L)$ denote the volume of the hyperbolic orbifold of underlying space $T^{2} \times I$ with a cone angle $\pi$ at singular locus $L$. 
For a general weave $L$, let $\vol_{\pi(L)}$ denote the sum of $\vol_{\pi}$ of the hyperbolic JSJ pieces of $T^{2} \times I \setminus L$. 
If $\vol(L) \neq 0$, the ratio $\vol_{\pi}(L) / \vol(L)$ is invariant under the finite covers of $T^{2} \times I$. 
Since the Schl\"{a}fli formula implies that the volumes decrease as the cone angles increase in deformation, 
we have $0 < \vol_{\pi}(L) / \vol(L) < 1$. 

For example, suppose that $L$ is the alternating $2 \times 2$-weave. 
Since the complement of $L$ can be decomposed into four regular ideal octahedra, 
we have $\vol(L) = 4V_{\mathrm{oct}}$. 
The hyperbolic orbifold to consider can be decomposed into four copies of a right-angled 4-trapezohedron (see \cite{Yoshida22} for details). 
This polyhedron can be decomposed into eight copies of a tetrahedron of dihedral angles $\pi/2$ and $\pi/4$ that is $1/16$ of a regular ideal octahedron as shown in Figure~\ref{fig:trapezohedron.pdf}. 
Hence $\vol_{\pi}(L) = 2V_{\mathrm{oct}}$, and so $\vol_{\pi}(L) / \vol(L) = 1/2$. 

\fig[width=9cm]{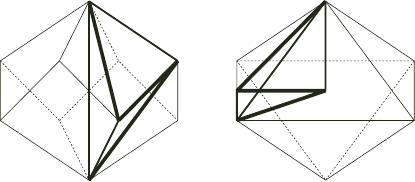}{Decompositions of a trapezohedron and an octahedron. The thick and thin edges of the two tetrahedra have dihedral angles $\pi/2$ and $\pi/4$ respectively.}

\begin{ques}
\
\begin{itemize}
\item Can we obtain an explicit expression of the volume function for weaves? 
\item What are the supremum and infimum of the ratios $\vol_{\pi}(L) / \vol(L)$ of the hyperbolic weaves $L$? 
\item How are the volumes of $m \times n$-weaves distributed for large $m$ and $n$? 
\item What is the hyperbolic $m \times n$-weave of the smallest volume? 
\end{itemize}
\end{ques}

We suggest the following conjecture on the last question. 

\begin{conj}
The weave shown in Figure~\ref{fig:small_weave.pdf} has the smallest volume among the hyperbolic $n \times n$-weaves for each $n \geq 2$. 
\end{conj}

\fig[width=3.5cm]{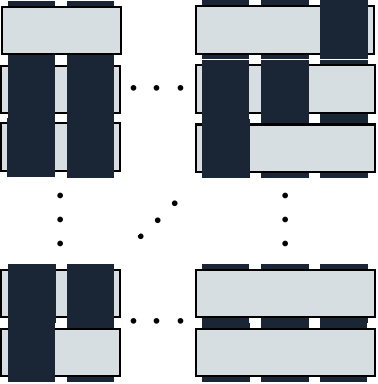}{An $n \times n$-weave with the conjecturally smallest volume}

The hyperbolic $2 \times 2$-weave is unique up to isotopy. 
There are precisely two hyperbolic $3 \times 3$-weaves up to homeomorphism as shown in Figure~\ref{fig:3-weave.pdf}, 
and their volumes are computed as approximately $24.0921$ and $26.7879$ by SnapPy \cite{SnapPy}. 

\fig[width=6cm]{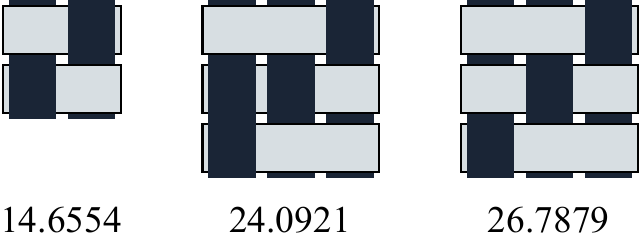}{The hyperbolic $2 \times 2$- and $3 \times 3$-weaves and their volumes}

The complement of the $n \times n$-weave in Figure~\ref{fig:small_weave.pdf} is homeomorphic to the complement of the minimally twisted $(2n+2)$-chain link. 
Indeed, each of the complements contains $2n+2$ 3-punctured spheres and two $(n+1)$-punctured spheres as shown in Figure~\ref{fig:chain.pdf}, 
and it is decomposed along these surfaces into four ideal $(n+1)$-antiprisms 
(the decomposition of the chain link was shown in \cite[Example 6.8.7]{Thurston78}). 
Both complements are obtained by the ``double of double'' (see \cite{Yoshida22}) along a checkerboard coloring of the faces of an antiprism, and so they coincide. 
This hyperbolic 3-manifold conjecturally has the smallest volume among the orientable hyperbolic 3-manifolds with $2n+2$ cusps for $2 \leq n \leq 4$. 
For $n \geq 5$, its volume is smaller than $(2n+2)V_{\mathrm{oct}}$, 
and $(2n+1)V_{\mathrm{oct}}$ is the smallest known volume of an orientable hyperbolic 3-manifold with $2n+2$ cusps as the author's knowledge (see \cite{KPR12} for details). 

\fig[width=12cm]{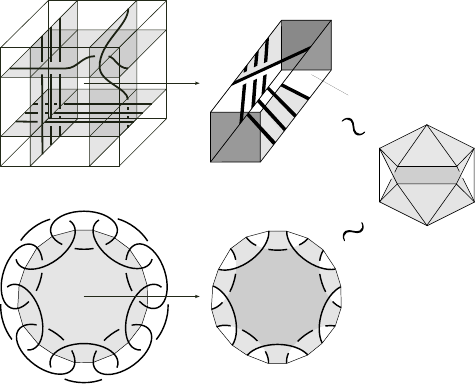}{Decompositions into four antiprisms}

Finally, we give an example that a weave is not uniquely determined by the complement up to homeomorphism. 
The complements of the two weaves in Figure~\ref{fig:weave_compl.pdf} are homeomorphic. 
In particular, they have the same volume, which is computed as approximately $34.4496$ by SnapPy \cite{SnapPy}. 
However, Corollary~\ref{cor:isotopy} implies that the two weaves are not mapped to each other by a homeomorphism on $T^{2} \times I$. 

\fig[width=12cm]{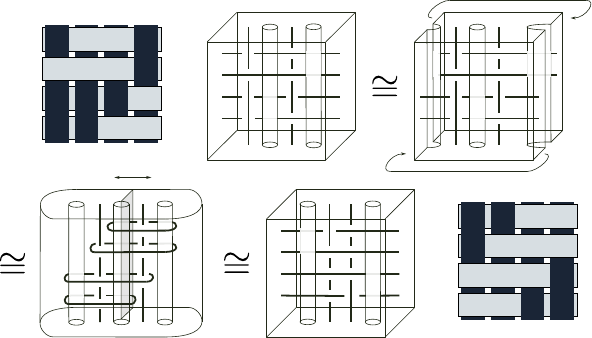}{Two weaves whose complements are homeomorphic}

\section*{Acknowledgements}
The author is grateful to Mizuki Fukuda, Katsuya Inoue, Kai Ishihara, Hiroki Kodama, Yuka Kotorii, Sonia Mahmoudi, Shunsuke Takano, and Wataru Yuasa for their helpful discussions. 
This work was supported by the World Premier International Research Center Initiative Program, International Institute for Sustainability with Knotted Chiral Meta Matter (WPI-SKCM$^2$), MEXT, Japan, 
and JSPS Program for Forming Japan's Peak Research Universities (J-PEAKS) Grant Number JPJS00420230011. 

\bibliographystyle{plain}
\bibliography{ref-ihw.bib}

\end{document}